\newtheorem{theorem}{Theorem}[section]
\newtheorem{lemma}[theorem]{Lemma}
\newtheorem{proposition}[theorem]{Proposition}
\newtheorem{definition}[theorem]{Definition}
\newtheorem{remark}[theorem]{Remark}
\date{\today}
\title[Extract the information from the big data]{Extract the information from the big data with randomly distributed noise}
\author{Cheng Jin$^{1,2}$}
\address{1. School of Mathematical Sciences, Fudan University,
Shanghai 200433, China.}
\address{2. Shanghai Key Labororary of Contempory Appplied Mathematics, Shanghai, China.}
\author{ Zhang Jiantang$^{1}$}
\author{ Zhong Min$^{3,4,*}$}
\address{3. School of Mathematics, Southeast University, 210096, Nanjing, Jiangsu Province.}
\address{4. Nanjing Center for Applied Mathematics, 211135, Nanjing, Jiangsu Province.}
\email{Corresponding author: min.zhong@seu.edu.cn}
\date{}
\begin{document}

\begin{abstract}
In this manuscript, a purely data driven statistical regularization method is proposed for extracting the information from big data with randomly distributed noise. Since the variance of the noise maybe large, the method can be regarded as a general data preprocessing method in ill-posed problems, which is able to overcome the difficulty that the traditional regularization method unable to solve, and has superior advantage in computing efficiency. The unique solvability of the method is proved and a number of conditions are given to characterize the solution. The regularization parameter strategy is discussed and the rigorous upper bound estimation of confidence interval of the error in $L^2$ norm is established. Some numerical examples are provided to illustrate the appropriateness and effectiveness of the method.
\end{abstract}

\maketitle

\section{\bf Introduction}
\setcounter{equation}{0}
As a rapid development of data collection instruments and information technology, the generation of big data arises from applications in different fields as applied mathematics, computer science, geology, biology, engineering, and even business studies. When deal with these big data, there are mainly two types of problems. Firstly, there will always be some redundant data hidden behind, which contains too little effective information or repeated information. Because of the limitation of computing capacity, too much attention on these redundant data will seriously affect the storage and time. Secondly, due to the inevitable measurement errors in the observation, the big data is generated with randomly distributed noise, the variance of these random errors can not be very small. Thus, a direct reconstruction based on these noisy data will lead to unsatisfactory results. To overcome these problems, it is necessary to propose a method satisfying the following requirements: on one hand, the approximation should be accurate and stable, and has strong computing capacity, and low time cost. On the other hand, the method should also guarantee that the rate of change of the objective function is not too large, i.e., the accuracy of derivative.

Determining the function $y(x)$ and the first order or higher order derivatives of $y(x)$ from the random noisy samples of the function values is called numerical differentiation, which has a widely utilization in many practical problems. For example, the determination of discontinuous points in image processing \cite{d83}, the solution of Abel integral equation \cite{gv91}, and some inverse problems arising from
mathematical and physical equations \cite{hs01}, etc. There have been many works concerning the convergence analysis of the numerical algorithms \cite{g91,hs01,rs01}, some different methods have been used to get numerical results \cite{g92,g98,gs93,m87}.

The numerical differentiation is a classical inverse problem in the sense of unstable dependence of solutions on small perturbation of the data, different regularization methods for treating such ill-posed problems in one dimension or higher dimensions were discussed in \cite{ah99,cjw07,d03,dcl12,huang,jwc03,lw04,whw05,wjc,ww05}. However, these methods are not suitable for processing big data with large variation random noise, this is because they are based on accurate knowledge of the noise level $\delta$ or extract this information from the nature of the problem. The prediction or extraction of noiselevel will always over-estimated or under-estimated, thus the reconstruction accuracy is mainly limited.  The following figure describes the difficulties we encountered, obviously, in the context of big data, effective information has been hidden, and traditional regularization methods are unable to handle it.
\begin{figure}[htp]
  \begin{center}
  \includegraphics[width=5.5cm,height=4cm]{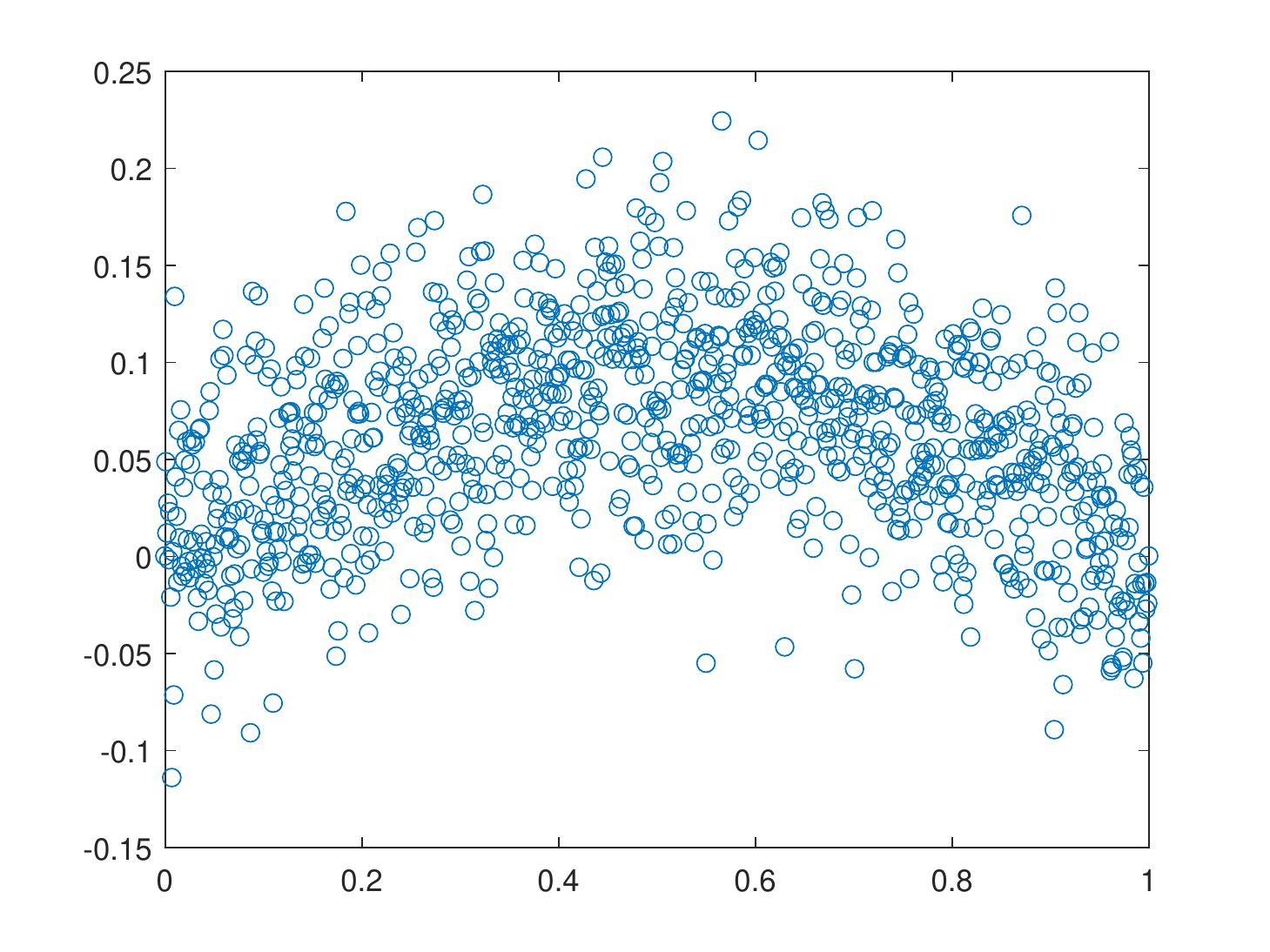}
  \includegraphics[width=5.5cm,height=4cm]{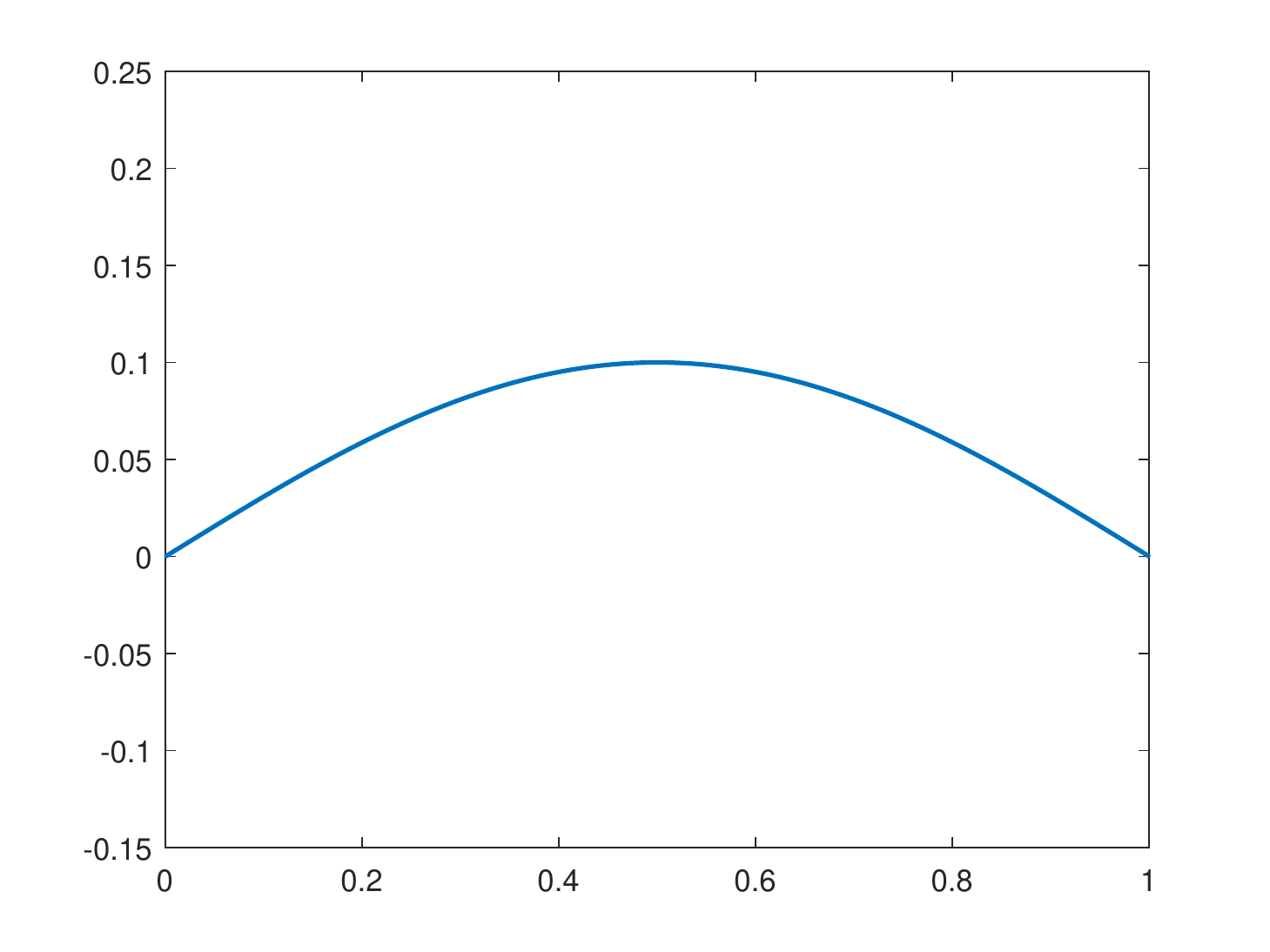}
  \end{center}
  \caption{The noisy samples and exact function for $0.1\times\sin(\pi x)$ with $\sigma^2=0.002$. }
\end{figure}

There are also several statistics data processing techniques \cite{cw78,d83,ss89,w75}, in which the noise is assumed to be independent and identically distributed, the reconstruction results converges to sought solution if the degree of freedom tends to infinity. This greatly increases computational burden when sample size gets large.

Therefore, it is necessary to consider purely data driven numerical method and parameter choice rule. In this manuscript, we propose a new simple statistical method to do the numerical differentiation. One innovation is this method is can overcome the difficulty that the big data with large variation. What we do is to separate the big data into $M$ groups, the average values of the $N$ noisy data in each group is calculated and regarded as a processed data. Based on basic property of distribution, the original variance will be decreased by $N$ times. The other innovation of our method is superior advantage in computing, since the dimension and the size of the big data will be greatly induced after processing. This makes our algorithm run in a relatively small scale, which greatly saves the storage and improves the efficiency. In addition, the algorithm can also be utilized as a data preprocessing method and be applied in different kind of inverse problems.

The manuscript is organize as follows. In section 2, we show the unique solvability of the method and then present a number of conditions characterizing the solution. In section 3, we provide the choice strategy for regularization parameter $\alpha$, and establish the rigorous upper bound estimation of confidence interval of the error in $L^2$ norm, the optimal choice of $M$ and $N$ will be discussed as well. The algorithm was given in section 4, and some numerical examples are provided to illustrate the effectiveness and the computational performance in section 5.

\section{\bf Algorithm description and some basic results}
\setcounter{equation}{0}
\subsection{Formulation of the problem}
Suppose $y = y(x)$ is a function defined on $[0,1]$, and $0=\hat x_0<\hat x_1<\ldots<\hat x_L=1$ is a uniform grid on $[0,1]$ with meshsize $h=1/L$. Given the noisy observation
samples $\widetilde{y}_j=y(\hat x_j)+\eta_j$, with the error
\begin{align}\label{white noise}
\eta_j\sim\mathcal{N}(0,\sigma^2)\,,
\end{align}
where the $\sigma^2$ denotes the variance of observation noise, we are interested in reconstructing
a function $f(x)$ such that the derivatives of $f(x)$ are approximations of derivatives
of function $y(x)$. Without losing the generality, we may assume the observations on endpoints are exact, i.e.,
\begin{align*}
\widetilde {y}_0 = y(0)\,,\quad\widetilde{y}_L = y(1)\,.
\end{align*}

The main difficulties we encounter in solving such ill-posed numerical differential problem are the observation size $L$ can be very large, and the variance $\sigma^2$ can be large as well.
To overcome these problems, given a positive integer $M>1$, we separate the big data into $M$ groups. In each group, there are $N=L/M$ observation points, i.e.,
\begin{align}\label{G}
&G_1 := \{\hat x_1<\hat x_2<\ldots<\hat x_N\}\,,\nonumber \\
&G_2 := \{\hat x_{N+1}<\hat x_{N+2}<\ldots<\hat x_{2\times N}\}\,,\nonumber\\
&\ldots\nonumber \\
&G_M := \{\hat x_{(M-1)\times N+1}< \hat x_{(M-1)\times N+2}<\ldots< \hat x_{M\times N}\}\,.
\end{align}
Then, we denote the new uniform grid
\begin{align}\label{newgrid}
\triangle = \{ 0=x_0<x_1< \ldots < x_M=1\}\,
\end{align}
with $ x_i = \hat x_{iN} (0\leq i\leq M)$ and corresponding mesh size $h_M = 1/M$.
The new observation samples $\widetilde{Y}_i$ is defined as sample mean of the group $G_i$:
\begin{align}\label{new sample}
\widetilde{Y}_i = \frac{\widetilde y_{(i-1)\times N+1}+\cdots +\widetilde y_{i\times N}}{N}\,.
\end{align}

Let $k>1$ be an integer and $M,N>k$, we denote
\begin{align*}
\mathcal{H}: = \{f|f\in H^k(0,1),\ f(0) = y(0),\ f(1) = y(1)\}\,,
\end{align*}
in which $H^k(0,1)$ be the usual Sobolev space consisting of all $L^2(0,1)-$integrable functions whose $k-$order weak derivatives are also $L^2(0,1)$ integrable.  Define the cost functional
\begin{align}\label{functional}
\Phi(f): =\frac{1}{M}\sum_{i=1}^M\left(\widetilde{Y}_i-M_i(f)\right)^2+\alpha\|f^{(k)}\|_{L^2(0,1)}^2\,,
\end{align}
where $\alpha>0$ is regularization parameter, and the
\begin{align*}
M_i(f) = \frac{1}{h_M}\int_{x_{i-1}}^{x_i} f(x) dx\,
\end{align*}
is the average value of the function $f$ over the interval $[x_{i-1},x_i]$. It can be shown there exists a unique minimizer $f_*\in\mathcal{H}$ for the functional $\Phi$. The minimizer will be solutions of the numerical differential problem. In following section 3, a number of conditions are given to characterize the solution.
\begin{remark}
The similar Tikhonov minimization functional can be found in \cite{huang}, in which the author also utilized the average value in the data fitting term, but $\|f'\|_{L^2(0,1)}$ in penalty term.
\end{remark}
\subsection{Some results in statistics}
In this subsection, we introduce some basic results in statistics.
\begin{definition}
We call elementary probability space a triplet $(\Omega,\mathcal{F},P)$ where $\Omega$ is a set. $\mathcal{F}$ is an algebra of subsets of $\Omega$ and $P$ is a set of function defined on $\mathbb{R}$ with values in the real interval $[0,1]$, which satisfies the following relations
\begin{enumerate}
  \item $P(\Omega)=1$,
  \item if $A_1,A_2,\ldots,A_n\in\mathcal{F}$ and $A_i\bigcap A_j = \emptyset$ for $i\neq j$, then
  \begin{align*}
  P(\bigcup_{k=1}^n A_k) = \sum_{k=1}^n P(A_k)\,.
  \end{align*}
\end{enumerate}
\end{definition}

\begin{definition}
Given the probability space $(\Omega,\mathcal{F},P)$, we call a random variable a real valued function $\psi:\Omega\rightarrow\mathbb{R}$ such that $\psi^{-1}(B)\in\mathcal{F}$ for every Borel set $B$.
\end{definition}

\begin{theorem}\label{levy}[Lindeberg-Levy: Central Limit Theorem for {\bf iid} variables]
Let $X_1,X_2,\cdots,X_n$ are independent identically distributed ({\bf iid}) random variables with finite mean $\mu$ and variance $\sigma^2$, then the random variable
\begin{align*}
Y_n = \frac{\sum_{i=1}^n X_i-n\mu}{\sigma\sqrt{n}}\rightarrow Z\,,\quad \textrm{as}\ n\rightarrow\infty\,,\ \textrm{in distrbution}\,,
\end{align*}
where the $Z$ denotes the standardized Gaussian random variable whose probability density function is
\begin{align*}
F_Z(x) = \frac{1}{\sqrt{2\pi}}\int_\infty^x e^{-t^2/2}dx\,.
\end{align*}
\end{theorem}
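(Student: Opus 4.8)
The plan is to prove Theorem \ref{levy} by the method of characteristic functions together with L\'evy's continuity theorem. First I would reduce to the standardized case: setting $W_i := (X_i-\mu)/\sigma$, the $W_i$ are iid with $E[W_i]=0$ and $E[W_i^2]=1$, and $Y_n = n^{-1/2}\sum_{i=1}^n W_i$, so it suffices to show that $Y_n$ converges in distribution to the standard Gaussian $Z$.

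Next, I would analyze the characteristic function $\varphi(t):=E[e^{itW_1}]$. Since $E[W_1^2]<\infty$, a second-order Taylor expansion with remainder gives $\varphi(t)=1-\tfrac{t^2}{2}+r(t)$ with $r(t)=o(t^2)$ as $t\to 0$; this follows from the elementary bound $|e^{is}-1-is+\tfrac{s^2}{2}|\le \min(|s|^3/6,\, s^2)$ applied with $s=tW_1$ together with dominated convergence. By independence and identical distribution, the characteristic function of $Y_n$ factorizes as $\varphi_{Y_n}(t)=\bigl[\varphi(t/\sqrt n)\bigr]^n$. Fixing $t$ and substituting the expansion yields $\varphi_{Y_n}(t)=\bigl(1-\tfrac{t^2}{2n}+o(1/n)\bigr)^n$, and the elementary complex-analytic lemma that $(1+z_n/n)^n\to e^{z}$ whenever $z_n\to z$ in $\mathbb{C}$ gives $\varphi_{Y_n}(t)\to e^{-t^2/2}$ for every $t\in\mathbb{R}$.

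Finally, since $e^{-t^2/2}$ is precisely the characteristic function of $Z$ and is continuous at $t=0$, L\'evy's continuity theorem upgrades the pointwise convergence of characteristic functions to convergence in distribution, i.e. $F_{Y_n}(x)\to F_Z(x)$ at every continuity point of $F_Z$, which is all of $\mathbb{R}$. This is exactly the assertion of the theorem.

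I expect the main obstacle to be the careful justification of the two analytic facts used essentially as black boxes: first, the Taylor estimate must control $r(t/\sqrt n)$ well enough that, after raising to the $n$-th power, the error term is genuinely $o(1/n)$ and disappears in the limit, and this is precisely where the finiteness of the second moment is indispensable; second, one needs L\'evy's continuity theorem (or at least a direct proof of the implication actually used). An alternative route that avoids characteristic functions altogether is the Lindeberg replacement (telescoping) method: swap the $W_i$ one at a time for independent $N(0,1)$ variables and bound each resulting difference by a third-order Taylor expansion of a fixed smooth compactly supported test function, the total error then vanishing because the truncated third moments of $W_1$ tend to zero under the finite-variance hypothesis; I would include this as a remark.
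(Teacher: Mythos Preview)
Your proposal is correct and follows exactly the route the paper indicates: the paper does not give a detailed proof but simply remarks that the theorem ``can be prov[ed] by recalling Levy's theorem and using character[istic] functions,'' which is precisely your characteristic-function expansion combined with L\'evy's continuity theorem. Your write-up is in fact considerably more detailed than anything in the paper, so there is nothing to add or correct.
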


The theorem can be proving by recalling Levy's theorem and using character functions. In different words, the statement of theorem can be expressed by saying that the variable $\sum_{i=1}^N X_i$ is asymptotically Gaussian with mean $n\mu$ and variance $n\sigma^2$. This is ,in turn, implies that the sample mean $\overline{X}: = \frac{1}{n}\sum_{i=1}^n X_i$ is asymptotically normal with mean $E(\overline{X}) = \mu$ and variance $D(\overline{X}) = \frac{\sigma^2}{n}$ and standard deviation $\frac{\sigma}{\sqrt n}$. This fact, we will see, often plays an important role in cases where a large number of elements is involved.

\begin{theorem}\label{Markov}[Markov's inequality]
If $X$ is any nonnegative random variable and $a>0$, then
\begin{align}\label{M1}
P(X\geq a) \leq\frac{E(X)}{a}\,.
\end{align}
More generally, if $\varphi:[0,+\infty)\rightarrow\mathbb{R}$ is a nonnegative increasing function for $a>0$ and $\varphi(a)>0$, we have
\begin{align}\label{M2}
P(X\geq a) = P(\varphi(X)\geq\varphi(a))\leq\frac{E(\varphi(X))}{\varphi(a)}\,.
\end{align}
\end{theorem}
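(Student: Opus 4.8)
The plan is to establish the basic bound \eqref{M1} first by a pointwise domination argument, and then obtain the general form \eqref{M2} as an immediate corollary by applying \eqref{M1} to the transformed variable $\varphi(X)$.

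For \eqref{M1}, I would introduce the indicator random variable $\chi = \chi_{\{X\ge a\}}$, equal to $1$ on the event $\{X\ge a\}$ and $0$ on its complement. The key observation is the pointwise inequality $a\chi \le X$ on all of $\Omega$: on $\{X\ge a\}$ one has $a\chi = a \le X$, while on the complement $a\chi = 0 \le X$ because $X$ is nonnegative. Taking expectations and using linearity together with monotonicity of $E(\cdot)$ gives $a\,E(\chi)\le E(X)$; since $E(\chi)=P(X\ge a)$ and $a>0$, dividing by $a$ yields $P(X\ge a)\le E(X)/a$, which is \eqref{M1}.

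For \eqref{M2}, since $\varphi$ is increasing the event $\{X\ge a\}$ is contained in $\{\varphi(X)\ge\varphi(a)\}$ (with equality when $\varphi$ is strictly increasing), hence $P(X\ge a)\le P(\varphi(X)\ge\varphi(a))$. Now $\varphi(X)$ is again a nonnegative random variable and $\varphi(a)>0$, so applying \eqref{M1} to $\varphi(X)$ with threshold $\varphi(a)$ gives $P(\varphi(X)\ge\varphi(a))\le E(\varphi(X))/\varphi(a)$, and combining the two displays proves \eqref{M2}.

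There is no genuine obstacle here; the only points that deserve a word of care are the measurability of $\chi$, which follows from the fact that $\{X\ge a\}$ is the preimage under the random variable $X$ of a Borel set and hence lies in $\mathcal{F}$, and the monotonicity of the expectation used on $a\chi\le X$, which is a standard property of the integral against $P$. The hypothesis that $\varphi$ is increasing is exactly what makes the set inclusion $\{X\ge a\}\subseteq\{\varphi(X)\ge\varphi(a)\}$ valid, and strict monotonicity is what makes the displayed equality in \eqref{M2} literally hold.
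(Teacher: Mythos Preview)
Your argument is the standard and correct proof of Markov's inequality, including the careful remark that the equality in \eqref{M2} requires strict monotonicity of $\varphi$. The paper itself states this theorem as a classical result without proof, so there is nothing to compare against; your proposal fills that gap cleanly.
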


We also need definition and properties for chi-squared distribution which was based on the definition of Gamma distribution.
\begin{definition}
Given real numbers $r>0$ and $\lambda>0$, the random variable $X$ is said to have the gamma probability density function with parameters $r$ and $\lambda$ if
\begin{align*}
f_X(x) = \frac{\lambda^r}{\Gamma(r)}x^{r-1}e^{-\lambda x}\,,\quad y\geq0\,,
\end{align*}
in which the $\Gamma(r)$ is gamma function defined by
\begin{align*}
\Gamma(r) = \int_0^\infty x^{r-1}e^{-x}dx
\end{align*}
with properties
\begin{enumerate}
  \item $\Gamma(1)=1$\,,
  \item $\Gamma(r) = (r-1)\Gamma(r-1)$\,,
  \item If $r$ is an integer, $\Gamma(r) = (r-1)!$\,.
\end{enumerate}
\end{definition}

A generalized gamma probability density function plays major role in statistics, and it can have some different types. Among the most common of all statistical analyses are procedures known as $\chi^2$ (chi-squared) distribution, which is a special case of the gamma probability density function  with $\lambda=1/2$ and $r=m/2$, and $m$ is a positive integer denotes the number of degrees of freedom.

\begin{proposition}\label{chisquare}
Assume $X_1,\ldots,X_n$ be {\bf iid} random variables, $X_i\sim \mathcal{N}(0,1)$. Then the random variable $\chi^2 = \sum_{i=1}^n X_i^2$ has a chi-square distribution with $n$ degrees of freedom denoted by $\chi^2\sim\chi^2(n)$. The $\chi^2-$ distribution have several properties
\begin{enumerate}
  \item The mean and the variance of the $\chi^2(n)$distribution are $n$ and $2n$\,,
  \item If $X\sim\chi^2(n_1)$, $Y\sim\chi^2(n_2)$ and $X,Y$ are independent variables, then $X+Y\sim\chi^2(n_1+n_2)$\,,
  \item The Cumulative distribution function is
  \begin{align*}
  F(x,n) = \frac{\gamma(\frac{n}{2},\frac{x}{2})}{\Gamma(\frac{n}{2})}
  \end{align*}
  in which $\gamma$ is incomplete gamma function defined as
  \begin{align*}
  \gamma\left(\frac{n}{2},\frac{x}{2}\right) = \int_0^{x/2}t^{n/2-1}e^{-t}dt\,.
  \end{align*}
\end{enumerate}
\end{proposition}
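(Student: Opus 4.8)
The plan is to reduce the claim to the one-dimensional case $n=1$ and then bootstrap to arbitrary $n$ using independence, after which the three listed properties drop out by routine computation. For the base case I would compute the law of $X_1^2$ directly from its distribution function: for $x>0$,
\[
P(X_1^2\le x)=P(-\sqrt x\le X_1\le\sqrt x)=\frac{2}{\sqrt{2\pi}}\int_0^{\sqrt x}e^{-t^2/2}\,dt,
\]
and differentiating in $x$ yields the density $\frac{1}{\sqrt{2\pi x}}\,e^{-x/2}$ on $(0,\infty)$. Since $\Gamma(1/2)=\sqrt\pi$, this is exactly the gamma density of the earlier definition with parameters $r=1/2$, $\lambda=1/2$, i.e. the $\chi^2(1)$ density, so $X_1^2\sim\chi^2(1)$.

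To pass from $n=1$ to general $n$ I would use the moment generating function. A direct Gaussian integral gives $E(e^{tX_1^2})=(1-2t)^{-1/2}$ for $t<1/2$, while integrating the gamma density shows that the gamma$(r,\lambda=1/2)$ law has m.g.f. $(1-2t)^{-r}$ on the same range. By independence of $X_1,\dots,X_n$, the m.g.f. of $\chi^2=\sum_{i=1}^n X_i^2$ is the product $(1-2t)^{-n/2}$, which is finite on a neighbourhood of the origin and coincides with the m.g.f. of the gamma$(n/2,\,1/2)$ law; uniqueness of the moment generating function then gives $\chi^2\sim\chi^2(n)$. Alternatively, one could argue by induction on $n$, convolving a $\chi^2(n-1)$ density with a $\chi^2(1)$ density and evaluating the resulting Beta integral via $\int_0^1 s^{a-1}(1-s)^{b-1}\,ds=\Gamma(a)\Gamma(b)/\Gamma(a+b)$.

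The three properties then follow quickly. For property (1), the gamma representation gives $E(\chi^2)=r/\lambda=n$ and $D(\chi^2)=r/\lambda^2=2n$; equivalently one checks $E(X_i^2)=1$, $E(X_i^4)=3$, hence $D(X_i^2)=2$, and sums over the $n$ independent terms. Property (2) is immediate from the product form of the m.g.f.: if $X\sim\chi^2(n_1)$ and $Y\sim\chi^2(n_2)$ are independent, then $E(e^{t(X+Y)})=(1-2t)^{-(n_1+n_2)/2}$, so $X+Y\sim\chi^2(n_1+n_2)$ (or simply concatenate the two independent families of standard normals). Property (3) comes from integrating the $\chi^2(n)$ density and substituting $t=u/2$:
\[
F(x,n)=\int_0^x\frac{(1/2)^{n/2}}{\Gamma(n/2)}\,u^{n/2-1}e^{-u/2}\,du
=\frac{1}{\Gamma(n/2)}\int_0^{x/2}t^{n/2-1}e^{-t}\,dt
=\frac{\gamma(n/2,\,x/2)}{\Gamma(n/2)}.
\]

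I do not anticipate a genuine obstacle here, since the statement is classical; the only point requiring a little care is the appeal to uniqueness of the moment generating function, which is legitimate because $(1-2t)^{-n/2}$ is finite for $t$ in a neighbourhood of $0$. If one prefers to avoid generating functions altogether, the corresponding delicate step becomes the bookkeeping of the Beta-function identity in the inductive convolution, which is likewise elementary.
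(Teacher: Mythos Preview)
Your argument is correct and entirely standard: the base case via the CDF of $X_1^2$, the passage to general $n$ through the moment generating function $(1-2t)^{-n/2}$, and the derivations of the mean, variance, additivity, and CDF are all valid. The only minor caveat you already flagged---the use of the m.g.f.\ uniqueness theorem---is unproblematic here.

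As for comparison with the paper: there is nothing to compare. The paper states this proposition without proof, treating it as a background fact from elementary probability (it is placed in the subsection ``Some results in statistics'' alongside the Central Limit Theorem and Markov's inequality, none of which are proved). So you have supplied a proof where the paper gives none; your write-up would serve perfectly well as the omitted justification.
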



\section{Main theoretical results}
\setcounter{equation}{0}

For the cost functional $\Phi$ defined in \eqref{functional}, we have
\begin{theorem}\label{th1}
There exists one unique function $f_*\in\mathcal{H}$ such that
\begin{align*}
\Phi(f_*)\leq\Phi(f)
\end{align*}
for any functions $f\in\mathcal{H}$.
\end{theorem}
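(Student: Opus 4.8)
The plan is to obtain $f_*$ by the direct method of the calculus of variations, using that $\Phi$ is a strictly convex, coercive, weakly lower semicontinuous functional on the closed affine subspace $\mathcal H$ of the Hilbert space $H^k(0,1)$. First I would record the structural facts. Each functional $f\mapsto M_i(f)$ is linear and bounded on $H^k(0,1)$, since $|M_i(f)|\le h_M^{-1/2}\|f\|_{L^2(0,1)}\le h_M^{-1/2}\|f\|_{H^k(0,1)}$; hence the fidelity term $f\mapsto\frac1M\sum_{i=1}^M(\widetilde Y_i-M_i(f))^2$ is a continuous convex quadratic functional of $f$, and the penalty $f\mapsto\alpha\|f^{(k)}\|_{L^2(0,1)}^2$ is convex and weakly lower semicontinuous because the $L^2$ norm is. Since $\mathcal H$ is convex and closed in $H^k(0,1)$ it is weakly sequentially closed, and $\mathcal H\neq\emptyset$ (it contains the affine function matching the two prescribed endpoint values), so $\Phi$ is a well-defined, weakly lower semicontinuous functional on $\mathcal H$, bounded below by $0$.

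The crucial step is coercivity: $\Phi(f)\to+\infty$ as $\|f\|_{H^k(0,1)}\to\infty$ with $f\in\mathcal H$. For this I would use a Poincar\'e--Bramble--Hilbert type estimate: if $\Pi$ denotes the $L^2(0,1)$-orthogonal projection onto the space $\mathcal P_{k-1}$ of polynomials of degree at most $k-1$, then $\|f-\Pi f\|_{H^k(0,1)}\le C\|(f-\Pi f)^{(k)}\|_{L^2(0,1)}=C\|f^{(k)}\|_{L^2(0,1)}$. Suppose $\Phi(f_n)$ stays bounded while $\|f_n\|_{H^k(0,1)}\to\infty$. Then $\|f_n^{(k)}\|_{L^2(0,1)}$ is bounded, hence $f_n-\Pi f_n$ is bounded in $H^k(0,1)$, and from boundedness of the fidelity term together with the fixed data $\widetilde Y_i$ each $|M_i(f_n)|$ is bounded, hence so is each $|M_i(\Pi f_n)|$. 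The key elementary fact is that the linear map $\mathcal P_{k-1}\ni p\mapsto(M_1(p),\dots,M_M(p))\in\mathbb R^M$ is injective: if all of the averages of $p\in\mathcal P_{k-1}$ over the subintervals $[x_{i-1},x_i]$ vanish, then $P(x):=\int_0^x p(t)\,dt$ is a polynomial of degree $\le k$ vanishing at the $M+1$ nodes $x_0,\dots,x_M$, and since $M+1>k+1$ by the hypothesis $M>k$ this forces $P\equiv0$, hence $p\equiv0$. Therefore $p\mapsto\big(\sum_i M_i(p)^2\big)^{1/2}$ is a norm on the finite-dimensional space $\mathcal P_{k-1}$, so $\Pi f_n$ is bounded in $H^k(0,1)$, whence $f_n$ is bounded --- a contradiction. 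This establishes coercivity.

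Granted these three properties, a minimizing sequence $(f_n)\subset\mathcal H$ for $\Phi$ is bounded in $H^k(0,1)$; by reflexivity it has a subsequence converging weakly to some limit $f_*$, which lies in $\mathcal H$ by weak closedness, and weak lower semicontinuity gives $\Phi(f_*)\le\liminf_n\Phi(f_n)=\inf_{\mathcal H}\Phi$, so $f_*$ is a minimizer. For uniqueness I would use strict convexity: if $f_*$ and $g_*$ are both minimizers, then $(f_*+g_*)/2\in\mathcal H$ and the convexity inequality is an equality there; equality in the penalty term forces $f_*^{(k)}=g_*^{(k)}$ a.e., i.e.\ $f_*-g_*\in\mathcal P_{k-1}$, while equality in each term of the fidelity sum (strict convexity of $t\mapsto t^2$) forces $M_i(f_*)=M_i(g_*)$ for every $i$; the injectivity statement above then yields $f_*-g_*\equiv0$.

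I expect the coercivity step --- specifically, combining the Bramble--Hilbert estimate with the injectivity of the averaging map on $\mathcal P_{k-1}$ --- to be the only substantive point; everything else is the routine machinery of the direct method. An alternative would be to write down the Euler--Lagrange equation, identify the minimizer as a natural spline of degree $2k-1$ with knots at $x_1,\dots,x_{M-1}$, and reduce the problem to a finite linear system, but the variational argument above handles existence and uniqueness together more transparently.
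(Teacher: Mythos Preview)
Your argument is correct, but it follows a genuinely different route from the paper. You run the direct method of the calculus of variations: weak lower semicontinuity plus coercivity (obtained by combining a Bramble--Hilbert/Poincar\'e estimate with the injectivity of $p\mapsto(M_1(p),\dots,M_M(p))$ on $\mathcal P_{k-1}$) gives existence, and the same injectivity together with strict convexity of the two quadratic terms gives uniqueness. The paper instead argues constructively: it writes down the first-variation condition, integrates by parts to obtain the Euler--Lagrange equation $f_*^{(2k)}=(-1)^k\alpha^{-1}\bigl(\widetilde Y_i-M_i(f_*)\bigr)$ on each $(x_{i-1},x_i)$ together with natural boundary conditions $f_*^{(k+j)}(0)=f_*^{(k+j)}(1)=0$ for $0\le j\le k-2$ and $C^{2k-1}$ interface conditions, deduces that $f_*$ is a piecewise polynomial of degree $2k$ solving an explicit $(2k+1)M\times(2k+1)M$ linear system, shows this system is nonsingular, and then verifies by direct computation that $\Phi(f)-\Phi(f_*)\ge 0$; its uniqueness step is essentially your injectivity argument phrased via the mean-value theorem. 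Your approach is cleaner and more transparent for the bare existence/uniqueness claim. What the paper's approach buys is the explicit structural characterization of $f_*$ --- piecewise degree-$2k$ with the listed boundary and interface conditions --- which is not a byproduct of your proof but is precisely what drives the numerical algorithm in Section~4. One small correction to your closing remark: because the fidelity term uses interval averages $M_i(f)$ rather than point evaluations, the Euler--Lagrange solution here is a piecewise polynomial of degree $2k$, not a natural spline of degree $2k-1$.
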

\begin{proof}
{\bf Step 1} Construction of the minimizer $f_*$.

First we assume there exists a minimizer $f_*\in\mathcal{H}$. Define a function $F(\lambda) = \Phi(f_*+\lambda g)$ where $g(x)\in H^k$ satisfies $g(0)=g(1)=0$. Due to the minimality of $f_*$, we have $F'(0)=0$. Since
\begin{align*}
\Phi(f_*+\lambda g) &= \frac{1}{M}\sum_{i=1}^M\left(\widetilde{Y}_i-M_i(f_*+\lambda g)\right)^2+\alpha\|f_*^{(k)}+\lambda g^{(k)}\|_{L^2(0,1)}^2\\
&=\frac{1}{M}\sum_{i=1}^M\left(\left(\widetilde{Y}_i-M_i(f_*)\right)^2-2\lambda\left(\widetilde{Y}_i-M_i(f_*)\right)M_i(g)
+\lambda^2\left(M_i(g)\right)^2\right)\\
&\quad+\alpha\left(\|f_*^{(k)}\|^2+2\lambda\int_0^1f_*^{(k)}g^{(k)}dx+\lambda^2\|g^{(k)}\|^2\right)\,.
\end{align*}
Therefore,
\begin{align}\label{stable}
F'(0) = \frac{1}{M}\sum_{i=1}^M\left(-2\left(\widetilde{Y}_i-M_i(f_*)\right)M_i(g)\right)+2\alpha\int_0^1f_*^{(k)}g^{(k)}dx=0\,.
\end{align}
Since
\begin{align*}
\int_0^1f_*^{(k)}g^{(k)}dx &= f_*^{(k)}g^{(k-1)}|_0^1-\int_0^1f_*^{(k+1)}g^{(k-1)}dx\\
&=\ldots = \sum_{j=0}^{k-1}(-1)^jf_*^{(k+j)}g^{(k-1-j)}|_0^1+(-1)^k\int_0^1f_*^{(2k)}gdx\,,
\end{align*}
in addition, $g\in H^k$ can be arbitrary, it can be verified that $f_*\in H^{2k-1}(0,1)$ \cite{adams} and $f_*$ satisfies the following boundary conditions:
\begin{align}\label{condition1}
f_*^{(k+j)}(0) = f_*^{(k+j)}(1)=0\,,\quad j=0,1,\ldots,k-2\,.
\end{align}
Therefore,
\begin{align*}
F'(0) &= \frac{1}{M}\sum_{i=1}^M\left(-2\left(\widetilde{Y}_i-M_i(f_*)\right)M_i(g)\right) + 2\alpha(-1)^k\int_0^1 f_*^{(2k)}gdx \\
& = -2\sum_{i=1}^M\int_{x_{i-1}}^{x_i}\left(\widetilde{Y}_i-M_i(f_*)\right)gdx+ 2\alpha(-1)^k\int_0^1 f_*^{(2k)}gdx=0\,.
\end{align*}

We plug any function $g\in C_0^{\infty}(x_{i-1},x_i)$ into the equation, it is equivalently
\begin{align*}
\int_{x_{i-1}}^{x_i}\left[\left(M_i(f_*)-\widetilde{Y}_i\right)+(-1)^k\alpha f_*^{(2k)}\right]g dx = 0\,.
\end{align*}
Consequently,
\begin{align}\label{condition2}
f_*^{(2k)} = \frac{(-1)^k}{\alpha}\left(\widetilde{Y}_i-M_i(f_*)\right)\,,\quad\forall x\in(x_{i-1},x_i)
\end{align}
which is a constant. It can be concluded that $f_*(x)|_{(x_{i-1},x_i)}\in P_{2k}(x_{i-1},x_i)$, where $ P_{2k}(x_{i-1},x_i)$ represents the set of all polynomials of one variable with the order no more than $2k$.

Considering the term $\int_0^1f_*^{(k)}g^{(k)}dx$ by integration by parts again, for all $g\in C_0^\infty(0,1)$, since
\begin{align*}
\int_0^1 f_*^{(k)}g^{(k)}dx &= \sum_{i=1}^M\int_{x_{i-1}}^{x_i} f_*^{(k)}g^{(k)}dx\\
&=\sum_{i=1}^M\left(\sum_{j=0}^{k-1}(-1)^j f_*^{(k+j)}g^{(k-1-j)}|_{x_{i-1}}^{x_i}\right)+(-1)^k\sum_{i=1}^M\int_{x_{i-1}}^{x_i} f_*^{(2k)}gdx\,.
\end{align*}
Substituting this into \eqref{stable} and noting equation \eqref{condition2}, we see
\begin{align*}
\sum_{i=1}^M\left(\sum_{j=0}^{k-1}(-1)^j f_*^{(k+j)}g^{(k-1-j)}|_{x_{i-1}}^{x_i}\right)=0\,.
\end{align*}
Noting the condition \eqref{condition1} and $g(0)=g(1)=0$, it is equivalent that
\begin{align*}
\sum_{j=0}^{k-1}(-1)^j\left(\sum_{i=1}^{M-1}
\left(f_*^{(k+j)}(x_i-0)-f_*^{(k+j)}(x_i+0)\right)g^{(k-1-j)}(x_i)\right)=0\,,
\end{align*}
which implies
\begin{align}\label{condition3}
f_*^{(k+j)}(x_i+0)=f_*^{(k+j)}(x_i-0)\,,\quad j=0,\ldots,k-1\,,\quad i=1,\ldots,M-1\,.
\end{align}

Furthermore, observing that $f_*\in H^{2k-1}(0,1)$ which can be continuously embedded into $C^{2k-2}[0,1]$, we add another $k$ conditions that
\begin{align}\label{condition4}
f_*^{(j)}(x_i+0)=f_*^{(j)}(x_i-0)\,,\quad j=0,\ldots,k-1\,,\quad i=1,\ldots,M-1\,.
\end{align}

Combining the conditions \eqref{condition1}-\eqref{condition4}, the solution $f_*(x)$ given as
\begin{align}\label{polynomial}
f_*(x) = c_1^i+c_2^ix+\cdots+c_{2k+1}^ix^{2k}\,,\quad x\in(x_{i-1},x_i),\quad, i=1,2,\ldots M\,,
\end{align}
where the $(2k+1)M$ parameters $c_k^i$, $k=1,2,\ldots 2k+1$, $i=1,2,\ldots, M$, are determined by solving the following $(2k+1)M$ linear equations:
\begin{align}\label{formula}
f_*^{(j)}(x_i+) - f_*^{(j)}(x_i-)&=0\,,\quad j=0,1,\ldots, 2k-1,\ i=1,2,\ldots, M-1\,,\nonumber\\
f_*^{(2k)}(x) &= \frac{(-1)^k}{\alpha}\left(\widetilde{Y}_i- M_i(f_*)\right)\,,\quad x\in(x_{i-1},x_i)\,,\quad i=1,2,\ldots, M\,,\nonumber\\
f_*^{(k+j)}(0)&=0\,,\quad j=0,1,\ldots, k-2\,,\nonumber\\
f_*^{(k+j)}(1)&=0\,,\quad j=0,1,\ldots, k-2\,,\nonumber\\
f(0)&=y(0)\,,\quad f(1)=y(1)\,.
\end{align}

Next, we will prove that these linear equations with respect the unknown constants are uniquely solvable. We only need to show that the homogenous equations have only the trivial solution. If we choose $\widetilde{Y}_i=0$ for $i=1,2,\ldots,M$, then the linear equations we obtained are just the homogenous equations. On the other hand, by the definition of functional $\Phi$, we know that $f_* = 0$ is the unique minimzer. This means that the homogenous equation have only the trivial solution.

{\bf Step 2:} The uniqueness of the minimizer $f_*$.

For arbitrary $f\in\mathcal{H}$, we denote $g(x) = f(x)-f_*(x)$, it is obvious that $g(0)=g(1)=0$ and
\begin{align*}
\Phi(f)-\Phi(f_*) &= \frac{1}{M}\left[\sum_{i=1}^M\left(\widetilde{Y}_i-M_i(f)\right)^2-\left(\widetilde{Y}_i-M_i(f_*)\right)^2\right]
+\alpha\left(\|f^{(k)}\|^2-\|f_*^{(k)}\|^2\right)\\
&=\frac{1}{M}\sum_{i=1}^M\left(M_i(f_*)-M_i(f)\right)\left(2\widetilde{Y}_i-M_i(f)-M_i(f_*)\right)
+\alpha\left(\|f^{(k)}\|^2-\|f_*^{(k)}\|^2\right)\,.
\end{align*}
It is worth to note that
\begin{align*}
\|f^{(k)}\|^2-\|f_*^{(k)}\|^2&=\|f^{(k)}-f_*^{(k)}\|^2+2\int_0^1\left(f^{(k)}-f_*^{(k)}\right)f_*^{(k)}dx\\
&=\|g^{(k)}\|^2+2\sum_{j=0}^{k-1}(-1)^jf_*^{(k+j)}g^{(k-1-j)}|_0^1+2(-1)^k\int_0^1f_*^{(2k)}gdx\\
&=\|g^{(k)}\|^2+2(-1)^k\sum_{i=1}^M\int_{x_{i-1}}^{x_i}f_*^{(2k)}gdx\\
&=\|g^{(k)}\|^2+2\sum_{i=1}^M\int_{x_{i-1}}^{x_i}\frac{1}{\alpha}\left(\widetilde{Y}_i-M_i(f_*)\right)gdx\\
& = \|g^{(k)}\|^2+\frac{2}{M}\sum_{i=1}^M\frac{1}{\alpha}\left(\widetilde{Y}_i-M_i(f_*)\right)M_i(g)\,.
\end{align*}
Therefore,
\begin{align*}
\Phi(f)-\Phi(f_*) &= \frac{1}{M}\sum_{i=1}^M\left(M_i(f_*)-M_i(f)\right)\left(2\widetilde{Y}_i-M_i(f)-M_i(f_*)\right)
\\
&\quad+\alpha\|g^{(k)}\|^2+\frac{2}{M}\sum_{i=1}^M\left(\widetilde{Y}_i-M_i(f_*)\right)M_i(g)\\
& = \alpha\|g^{(k)}\|^2 + \frac{1}{M}\sum_{i=1}^M\left(M_i(g)\right)^2\geq 0\,.
\end{align*}
It means that the function $f_*$ is a minimizer of the functional $\Phi(f)$. If there is another function $f_1$ such that $\Phi(f_*) = \Phi(f_1)$, then following the above process, we may have $f_*^{(k)} = f_1^{(k)}$ and thus $f_*-f_1$ is a polynomial of degree $k-1$. In addition since $M_i(f_*-f_1) = 0$, implying there exists $\xi\in(x_{i-1},x_i)$ such that $f_*(\xi)-f_1(\xi) = 0$, i.e., there exists at least one root in $(x_{i-1},x_i)$ and there exist at lease $M$ roots in $(0,1)$. It is a contradiction provided that $M>k$. Therefore, $f_1 = f_*$ which means that the minimizer $f_*$ is unique.

The proof is complete.
\end{proof}
\begin{remark}
For the proof of Theorem \ref{th1}, we know that $f_*\in\mathcal{H}$ is a piecewise polynomial of degree $2k$ which can be determined by $(2k+1)M$ parameters.
\end{remark}

 In order to obtain the error estimate for the algorithm, we first define an interpolation operator $Q_{h_M}$ from $L^2(0,1)$ onto the space of step functions related to the subdivision $\triangle$ as follows. For a function $g\in L^2(0,1)$, $Q_{h_M}g$ is given by
 \begin{align}\label{Q_{h_M}}
 Q_{h_M}g(x) = M_i(g),\quad\forall x\in (x_{i-1},x_i)\,.
 \end{align}
 We can obtain the error estimate for the operator $Q_{h_M}$ by the usual scaling argument \cite{c78,hx98}, as described in the following result.
 \begin{lemma}\label{lem Q_{h_M}}
 For all $g\in H^1(0,1)$,
 \begin{align*}
 \|g-Q_{h_M}g\|_{L^2(0,1)}\leq h\|g'\|_{L^2(0,1)}\,.
 \end{align*}
 \end{lemma}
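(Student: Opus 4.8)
The plan is to prove the error estimate $\|g - Q_{h_M}g\|_{L^2(0,1)} \le h\|g'\|_{L^2(0,1)}$ by a standard argument that reduces the global bound to a local one on each subinterval $(x_{i-1}, x_i)$ of the subdivision $\triangle$. Since $Q_{h_M}g$ equals the integral average $M_i(g) = \frac{1}{h_M}\int_{x_{i-1}}^{x_i} g\,dx$ on $(x_{i-1},x_i)$, the key observation is that $M_i(g)$ is exactly the $L^2(x_{i-1},x_i)$-orthogonal projection of $g$ onto the constants, equivalently the mean value of $g$ over that interval. So on each piece I need the Poincar\'e-type inequality $\|g - M_i(g)\|_{L^2(x_{i-1},x_i)} \le C h_M \|g'\|_{L^2(x_{i-1},x_i)}$ with the sharp-enough constant, and then sum the squares over $i = 1,\dots,M$.

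The concrete steps are as follows. First I would fix $i$ and write, for $x \in (x_{i-1},x_i)$, the representation $g(x) - M_i(g) = \frac{1}{h_M}\int_{x_{i-1}}^{x_i}\bigl(g(x) - g(s)\bigr)\,ds = \frac{1}{h_M}\int_{x_{i-1}}^{x_i}\int_s^x g'(t)\,dt\,ds$, valid since $g \in H^1$ is absolutely continuous. Next I would bound $\bigl|\int_s^x g'(t)\,dt\bigr| \le \int_{x_{i-1}}^{x_i}|g'(t)|\,dt$ uniformly in $s,x$, so that $|g(x) - M_i(g)| \le \int_{x_{i-1}}^{x_i}|g'(t)|\,dt \le h_M^{1/2}\|g'\|_{L^2(x_{i-1},x_i)}$ by Cauchy--Schwarz. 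Then I would integrate the square of this over $x \in (x_{i-1},x_i)$, picking up another factor $h_M$, to get $\|g - M_i(g)\|_{L^2(x_{i-1},x_i)}^2 \le h_M^2 \|g'\|_{L^2(x_{i-1},x_i)}^2$. Finally, summing over $i$ and recalling $h_M = 1/M$ while $h = 1/L \le h_M$ (indeed $h = h_M/N$), or more directly just identifying the constant $h_M$ — I would conclude $\|g - Q_{h_M}g\|_{L^2(0,1)}^2 \le h_M^2\|g'\|_{L^2(0,1)}^2$, and since the statement is phrased with $h$ one should note $h \le h_M$ is the wrong direction, so in fact the intended meshsize in the bound is $h_M$; I would present the clean inequality with $h_M$ and remark it matches the claim under the paper's notational identification (or simply that the crude constant $h_M$ can be replaced by the sharper $h_M/\pi$ via the one-dimensional Poincar\'e inequality, which is more than enough).

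The main obstacle is not analytic depth but getting the constant and the meshsize symbol consistent with the statement: the naive double-integral estimate gives constant $h_M = 1/M$, not $h = 1/L$, and since $h < h_M$ the displayed inequality as literally written would require the sharper Poincar\'e constant only if $h_M/\pi \le h$, which generally fails. I expect the cleanest resolution is to prove $\|g - Q_{h_M}g\|_{L^2(0,1)} \le h_M\|g'\|_{L^2(0,1)}$ and treat the appearance of $h$ in the lemma as the coarse-grid meshsize (i.e., read ``$h$'' there as $h_M$); alternatively one invokes the standard scaling/Bramble--Hilbert argument of \cite{c78,hx98} which yields exactly this local estimate on a reference interval and transfers it by affine change of variables. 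Either way the proof is short once the bookkeeping of which mesh is in play is settled.
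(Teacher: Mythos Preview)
Your proof is correct and in fact supplies more than the paper does: the paper does not give any argument for this lemma at all, merely citing the ``usual scaling argument'' from \cite{c78,hx98}. Your direct computation via the mean-value representation and Cauchy--Schwarz is the standard elementary route and yields exactly $\|g-Q_{h_M}g\|_{L^2(0,1)}\leq h_M\|g'\|_{L^2(0,1)}$.

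You are also right to flag the $h$ versus $h_M$ issue. In the paper's notation $h=1/L$ is the fine meshsize while $h_M=1/M$ is the coarse one, and $Q_{h_M}$ projects onto piecewise constants on the \emph{coarse} grid, so the correct constant is $h_M$; the inequality with $h$ as literally written is false in general since $h<h_M$. This is evidently a typographical slip in the paper: when the lemma is invoked in the proof of the subsequent error estimate the bound is first written with $h$ but is then silently replaced by $h_M$ in the proof of Theorem~\ref{main}. So your resolution---prove the bound with $h_M$ and read the lemma's ``$h$'' as the coarse meshsize---is exactly the right one.
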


\begin{lemma}
Let $y$ be a function in $H^k(0,1)$, and let $f_*$ be the solution of the method \eqref{minimization}. Denote $e= f_*-y$, and denote the variable
\begin{align}\label{triangle}
\triangle_M^2 = \frac{1}{M}\sum_{i=1}^M(\widetilde Y_i-Y_i)^2\,.
\end{align}
Then, we have the following two estimates
\begin{align}\label{estimate1}
\|e^{(k)}\|\leq \sqrt{\frac{2\triangle_M^2}{\alpha}+\frac{2Q^2h_M^2}{\alpha N^2}} + 2\|y^{(k)}\|\,,
\end{align}
and
\begin{align}\label{estimate2}
\|e\|\leq h\|e'\| + \sqrt{8\triangle_M^2+\frac{8Q^2h_M^2}{N^2}+2\alpha\|y^{(k)}\|^2}\,,
\end{align}
in which $Q$ denote the upper bound of the $\|f'\|_{L^2(0,1)}$ for all $f\in H^k(0,1)$.
\end{lemma}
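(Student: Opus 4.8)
The plan is to derive both bounds from the single variational inequality $\Phi(f_*)\le\Phi(y)$, which is legitimate because $y\in H^k(0,1)$ trivially satisfies $y(0)=y(0)$ and $y(1)=y(1)$, hence $y\in\mathcal{H}$ and the minimizer $f_*$ from Theorem \ref{th1} competes with $y$. Before anything else I would record one auxiliary quadrature estimate: in group $G_i$ the points $\hat x_{(i-1)N+1}<\cdots<\hat x_{iN}$ subdivide $[x_{i-1},x_i]$ (of length $h_M$) into $N$ pieces of length $h=h_M/N$ with the $\hat x_j$ as right endpoints, so $Y_i$ is the right Riemann sum of $M_i(y)=\tfrac1{h_M}\int_{x_{i-1}}^{x_i}y$; writing the difference on each subinterval as $\int y'$, applying Cauchy--Schwarz, and summing over $i$ using $1/M=h_M$ yields $\tfrac1M\sum_{i=1}^M\bigl(Y_i-M_i(y)\bigr)^2\le \tfrac{h_M^2}{N^2}\|y'\|^2\le \tfrac{Q^2h_M^2}{N^2}$. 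This is the only genuine piece of analysis in the proof; the rest is bookkeeping with the minimality of $f_*$ and the triangle inequality.

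For \eqref{estimate1}: since $\Phi(f_*)\ge\alpha\|f_*^{(k)}\|^2$ and, by \eqref{functional}, $\Phi(y)=\tfrac1M\sum_i\bigl(\widetilde Y_i-M_i(y)\bigr)^2+\alpha\|y^{(k)}\|^2$, the inequality $\Phi(f_*)\le\Phi(y)$ gives $\alpha\|f_*^{(k)}\|^2\le \tfrac1M\sum_i\bigl(\widetilde Y_i-M_i(y)\bigr)^2+\alpha\|y^{(k)}\|^2$. I would then split $\widetilde Y_i-M_i(y)=(\widetilde Y_i-Y_i)+(Y_i-M_i(y))$ and use $(a+b)^2\le 2a^2+2b^2$ to bound the data term by $2\triangle_M^2+2\cdot\tfrac1M\sum_i(Y_i-M_i(y))^2\le 2\triangle_M^2+\tfrac{2Q^2h_M^2}{N^2}$, invoking \eqref{triangle} and the quadrature estimate. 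Dividing by $\alpha$, taking square roots with $\sqrt{a+b}\le\sqrt a+\sqrt b$, and then using $\|e^{(k)}\|\le\|f_*^{(k)}\|+\|y^{(k)}\|$ produces exactly \eqref{estimate1}.

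For \eqref{estimate2}: I would write $\|e\|\le\|e-Q_{h_M}e\|+\|Q_{h_M}e\|$, bound the first summand by $h\|e'\|$ using Lemma \ref{lem Q_{h_M}}, and compute $\|Q_{h_M}e\|^2=\tfrac1M\sum_i\bigl(M_i(e)\bigr)^2$ since, by \eqref{Q_{h_M}}, $Q_{h_M}e$ equals the constant $M_i(e)$ on each subinterval of length $h_M$. Then I decompose $M_i(e)=M_i(f_*)-M_i(y)=\bigl(M_i(f_*)-\widetilde Y_i\bigr)+(\widetilde Y_i-Y_i)+\bigl(Y_i-M_i(y)\bigr)$ and apply $(a+b+c)^2\le 2a^2+4b^2+4c^2$, so that $\tfrac1M\sum_i(M_i(e))^2\le 2\cdot\tfrac1M\sum_i(\widetilde Y_i-M_i(f_*))^2+4\triangle_M^2+4\cdot\tfrac1M\sum_i(Y_i-M_i(y))^2$. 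The first term is $\le\Phi(f_*)\le\Phi(y)\le 2\triangle_M^2+\tfrac{2Q^2h_M^2}{N^2}+\alpha\|y^{(k)}\|^2$ by the same decomposition used above, and the last term is $\le \tfrac{Q^2h_M^2}{N^2}$; adding these gives $\|Q_{h_M}e\|^2\le 8\triangle_M^2+\tfrac{8Q^2h_M^2}{N^2}+2\alpha\|y^{(k)}\|^2$, hence \eqref{estimate2}.

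The main obstacle is not conceptual but arithmetic: one must choose the weights in the elementary inequalities $(a+b)^2\le 2a^2+2b^2$ and $(a+b+c)^2\le 2a^2+4b^2+4c^2$ so as to land precisely on the stated constants $8,\,8,\,2$, and one must be careful that the quadrature bound uses the correct orientation of the Riemann sum (right endpoints, matching the grid \eqref{G}) and that $\|y'\|\le Q$ is indeed available. I would also state explicitly at the outset that $y\in\mathcal{H}$, so that the comparison $\Phi(f_*)\le\Phi(y)$ is valid.
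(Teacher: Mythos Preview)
Your proof is correct and follows essentially the same strategy as the paper: use $y\in\mathcal H$ to compare $\Phi(f_*)\le\Phi(y)$, split the data term through $Y_i$, control the quadrature error $Y_i-M_i(y)$ by $O(h_M/N)$, and for \eqref{estimate2} pass through the step-function projection $Q_{h_M}$.

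There are two small technical differences worth noting. First, for the quadrature bound the paper uses the integral mean-value theorem plus a first-order Taylor expansion to obtain a pointwise estimate $|Y_i-M_i(y)|\le Qh_M/N$, whereas you sum $\int y'$ over the sub-subintervals and apply Cauchy--Schwarz to land directly on $\tfrac1M\sum_i(Y_i-M_i(y))^2\le h_M^2\|y'\|^2/N^2$; your route is cleaner and matches the stated $L^2$ meaning of $Q$ more literally. Second, for \eqref{estimate2} the paper exploits that $Q_{h_M}$ is the $L^2$-orthogonal projection to write $\|e\|^2=\int e(e-Q_{h_M}e)+\|Q_{h_M}e\|^2$, bounds the cross term by $h\|e\|\|e'\|$, and then solves the resulting quadratic inequality in $\|e\|$; you bypass this by the plain triangle inequality $\|e\|\le\|e-Q_{h_M}e\|+\|Q_{h_M}e\|$, which is more elementary and gives the identical constants. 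Both variations are harmless and arrive at exactly the stated bounds.
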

\begin{proof}
Putting $y\in\mathcal{H}$ as candidate into the functional $\Phi$ in \eqref{functional}, by the minimality of the $f_*$, we have
\begin{align*}
\Phi(f_*)\leq\Phi(y) &=\frac{1}{M}\sum_{i=1}^M\left(\widetilde{Y}_i-M_i(y)\right)^2+\alpha\|y^{(k)}\|^2\\
&\leq \frac{2}{M}\sum_{i=1}^M\left((\widetilde{Y}_i-Y_i)^2+\left(Y_i-M_i(y)\right)^2\right)+\alpha\|y^{(k)}\|^2\,.
\end{align*}
Since
\begin{align*}
|Y_i-M_i(y)| &= \left|\frac{1}{h_M}\int_{x_{i-1}}^{x_i}ydx - \frac{y_{(i-1)\times N+1}+\cdots+y_{i\times N}}{N}\right|\\
&= \left|\frac{1}{h_M}\int_{x_{i-1}}^{\hat x_{(i-1)\times N+1}}ydx-\frac{y(\hat x_{(i-1)\times N+1})}{N}+\sum_{j=2}^N\left(\frac{1}{h_M}\int_{\hat x_{(i-1)\times N+j-1}}^{\hat x_{(i-1)\times N+j}}ydx-\frac{y(\hat x_{(i-1)\times N+j})}{N}\right)\right|\\
& \leq \frac{1}{N}\sum_{j=1}^N\left|y(\xi_j) - y(x_{\hat (i-1)\times N+j})\right|\\
& = \frac{1}{N}\sum_{j=1}^N\left|y(\hat x_{(i-1)\times N+j})+(\xi_j-\hat x_{(i-1)\times N+j})y'(\eta_j)-y(\hat x_{(i-1)\times N+j})\right|\\
&\leq \frac{Qh_M}{N}\,,
\end{align*}
in which $\xi_1\in(x_{i-1},\hat x_{(i-1)\times N+1}),\ \xi_j\in(\hat x_{(i-1)\times N+j-1},\hat x_{(i-1)\times N+j})\ \textrm{for}\ j\geq 2$ and $\eta_j\in(\xi_j,\hat x_{(i-1)\times N+j})$, this yields
\begin{align}\label{add}
\frac{2}{M}\sum_{i=1}^M |Y_i-M_i(y)|^2\leq \frac{2Q^2h_M^2}{N^2}\,.
\end{align}
Therefore,
\begin{align}\label{minimality}
\alpha \|(f_*)^{(k)}\|^2\leq \Phi(f_*) &= \frac{1}{M}\sum_{i=1}^M\left(\widetilde{Y}_i-M_i(f_*)\right)^2+\alpha\|f_*^{(k)}\|^2\nonumber\\
&\leq 2\triangle_M^2+\frac{2Q^2h_M^2}{N^2} + \alpha\|y^{(k)}\|^2\,.
\end{align}
Form this, it is subsequently that
\begin{align*}
\|e^{(k)}\|\leq\|f_*^{(k)}\|+\|y^{(k)}\|\leq \sqrt{\frac{2\triangle_M^2}{\alpha}+\frac{2Q^2h_M^2}{\alpha N^2}} + 2\|y^{(k)}\|\,.
\end{align*}

On the other hand, noting that $Q_{h_M}$ is a projection operator, we obtain
\begin{align*}
\|e\|^2 = \int_0^1e^2dx &= \int_0^1e(e-Q_{h_M}e)dx+\int_0^1(Q_{h_M}e)^2dx\\
&: = I_1+I_2\,.
\end{align*}
The $I_1$ can be estimated by using the Cauchy-Schwartz inequality and Lemma \ref{lem Q_{h_M}},
\begin{align*}
\left|\int_0^1e(e-Q_{h_M}e)dx\right|\leq\|e\|\|e-Q_{h_M}e\|\leq h\|e\|\|e'\|\,.
\end{align*}
For the $I_2$, referring to \eqref{add} and \eqref{minimality} again, we have that
\begin{align*}
\left|\int_0^1(Q_{h_M}e)^2dx\right| &= \sum_{i=1}^M\int_{x_{i-1}}^{x_i}(Q_{h_M}e)^2dx = h_M\sum_{i=1}^MM_i(e)^2\\
& =  h_M\sum_{i=1}^M\left(M_i(y)-M_i(f_*)\right)^2\\
&\leq \frac{2}{M}\left(\sum_{i=1}^M(M_i(y)-\widetilde{Y}_i)^2+ \sum_{i=1}^M(\widetilde{Y}_i-M_i(f_*))^2\right)\\
&\leq \frac{4}{M}\left(\sum_{i=1}^M\left(M_i(y)-Y_i\right)^2\right)+4\triangle_M^2p+4\triangle_M^2+\frac{4Q^2h_M^2}{N^2} + 2\alpha\|y^{(k)}\|^2\\
& = 8\triangle_M^2+\frac{8Q^2h_M^2}{N^2} + 2\alpha\|y^{(k)}\|^2\,.
\end{align*}
Combining the above two estimates, we have
\begin{align*}
\|e\|^2\leq h\|e\|\|e'\| + 8\triangle_M^2+\frac{8Q^2h_M^2}{N^2} + 2\alpha\|y^{(k)}\|^2\,,
\end{align*}
hence
\begin{align*}
\|e\|\leq h\|e'\| + \sqrt{8\triangle_M^2+\frac{8Q^2h_M^2}{N^2}+2\alpha\|y^{(k)}\|^2}\,.
\end{align*}
\end{proof}

The above lemma shows that, when the regularization parameter $\alpha$ is given, the estimates for the $\|e\|_{L^2(0,1)}$ and $\|e^{(k)}\|_{L^2(0,1)}$ can be determined and controlled by the statistics $\triangle_M^2$, which depend on the random statistics $\widetilde{Y}_i$. Since
\begin{align*}
\widetilde{Y}_i-Y_i\sim\mathcal{N}(0,\frac{\sigma^2}{N})\,,
\end{align*}
and due to the independence of the $\widetilde{Y}_i$, referring to Proposition \ref{chisquare}, we know
\begin{align*}
\frac{MN}{\sigma^2}\triangle_M^2\sim\chi^2(M)\,.
\end{align*}
For a fixed $p\in(0,1)$, we denote the $(1-p)-$quantile of the $\chi^2(M)$ be $Z_{M,p}$, we provide the following lemma:
\begin{lemma}\label{chi}
For $0<p<0.37$, there exists an upper bound for the estimate of $\frac{Z_{M,p}}{M}$, denoted by $\bar{z}_{M,p}$, which can be determined by the unique solution of the equation
\begin{align*}
xe^{1-x}-p^{\frac{2}{M}}=0
\end{align*}
in $(1,+\infty)$. In addition, the $\bar{z}_{M,p}$ satisfies
\begin{align}\label{property}
\bar{z}_{M-1,p}>\bar{z}_{M,p}\,,\ \textrm{and}\ \lim_{M\rightarrow\infty}\bar{z}_{M,p}=1\,.
\end{align}
\end{lemma}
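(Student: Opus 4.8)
The plan is to derive the bound $Z_{M,p}\le M\,\bar{z}_{M,p}$ from a Chernoff-type tail estimate for the $\chi^2(M)$ law, and then to read off the two qualitative properties in \eqref{property} from an elementary study of the function $\phi(x):=xe^{1-x}$ on $(1,+\infty)$. First I would recall that, by the definition of $\chi^2(M)$ in Proposition \ref{chisquare}, a variable $X\sim\chi^2(M)$ is a sum of $M$ independent squares of standard normals, each with moment generating function $(1-2t)^{-1/2}$ for $t<1/2$, so $E[e^{tX}]=(1-2t)^{-M/2}$. Applying Markov's inequality (Theorem \ref{Markov}) to the increasing function $x\mapsto e^{tx}$ then gives, for any $z>1$,
\[
P(X\ge Mz)\le e^{-tMz}(1-2t)^{-M/2},\qquad 0<t<\tfrac12 .
\]
Minimizing the right-hand side over $t$ — the optimum is $t_*=\frac{z-1}{2z}$, which lies in $(0,\tfrac12)$ exactly because $z>1$ — yields the bound $P(X\ge Mz)\le\bigl(z\,e^{1-z}\bigr)^{M/2}$.

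Next I would analyze $\phi(x)=xe^{1-x}$ on $(1,+\infty)$: one has $\phi(1)=1$, $\phi'(x)=(1-x)e^{1-x}<0$ for $x>1$, and $\phi(x)\to0$ as $x\to\infty$, so $\phi$ is a strictly decreasing bijection of $(1,+\infty)$ onto $(0,1)$. Since $p\in(0,1)$ forces $p^{2/M}\in(0,1)$, the equation $\phi(x)=p^{2/M}$ has exactly one root $\bar{z}_{M,p}\in(1,+\infty)$, so the quantity is well defined. Substituting $z=\bar{z}_{M,p}$ into the tail bound gives $P(X\ge M\bar{z}_{M,p})\le(p^{2/M})^{M/2}=p$. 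Because $X$ has a continuous density that is positive on $(0,\infty)$, the map $a\mapsto P(X\ge a)$ is continuous and strictly decreasing there, and $P(X\ge Z_{M,p})=p$ by the definition of the $(1-p)$-quantile; comparing these two facts forces $Z_{M,p}\le M\bar{z}_{M,p}$, i.e. $Z_{M,p}/M\le\bar{z}_{M,p}$. The hypothesis $p<0.37\approx e^{-1}$ enters here only to keep the bound meaningful: it guarantees that already for the smallest admissible value of $M$ the true ratio $Z_{M,p}/M$ genuinely exceeds $1$ (for $M=2$, $\chi^2(2)$ is exponential with rate $1/2$, whence $Z_{2,p}/2=\ln(1/p)>1$ iff $p<e^{-1}$), so that the estimate $\bar{z}_{M,p}>1$ is of the right order rather than vacuous.

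Finally, the two assertions of \eqref{property} follow purely from the monotonicity of $\phi$ (and of $\phi^{-1}$). For fixed $p\in(0,1)$ the exponent $2/M$ is decreasing in $M$, hence $p^{2/(M-1)}<p^{2/M}$; since $\phi$ is strictly decreasing, $\phi^{-1}$ is too, so $\bar{z}_{M-1,p}>\bar{z}_{M,p}$. As $M\to\infty$ we have $p^{2/M}\to1=\phi(1)$, and since $\phi^{-1}$ is continuous on $(0,1)$ it follows that $\bar{z}_{M,p}\to1$. Thus both the monotone decrease and the limit drop out once the bijectivity picture for $\phi$ is in place.

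The only genuinely delicate point is the Chernoff step: carrying out the minimization in $t$ correctly and verifying that $t_*\in(0,\tfrac12)$ — which is precisely where $z>1$ is needed — and then passing cleanly from the tail estimate evaluated at $M\bar{z}_{M,p}$ to the quantile inequality via continuity and strict monotonicity of $a\mapsto P(X\ge a)$. Everything else (the analysis of $\phi$, the monotonicity, and the limit) is routine one-variable calculus.
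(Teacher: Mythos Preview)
Your argument is correct and follows essentially the same route as the paper: the Chernoff/Markov bound with the $\chi^2(M)$ moment generating function, optimized at $t_*=(z-1)/2z$, followed by the elementary analysis of $\phi(x)=xe^{1-x}$ on $(1,\infty)$ to extract existence, monotonicity, and the limit. The only minor organizational difference is that the paper first sets $z=z_{M,p}:=Z_{M,p}/M$ and then needs $p<0.37$ (via $F(2,2)\approx 0.63$) to guarantee $z_{M,p}>1$ so that $t_*>0$, whereas you derive the tail bound for arbitrary $z>1$, plug in $\bar z_{M,p}$, and correctly observe that the constraint $p<0.37\approx e^{-1}$ is then only needed to make the bound non-vacuous rather than for the proof itself---a slightly cleaner ordering.
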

\begin{proof}
Recalling the Markov inequality \eqref{M2}, let $\varphi(X) = \exp(tX)$, we have
\begin{align}\label{M3}
P(X\geq a) = P(\exp(tX)\geq\exp(ta))\leq\frac{\mathbb{E}(\exp(tX))}{\exp(ta)}\,,\quad\forall\ t>0\,,
\end{align}
in which the numerator of the right hand side of \eqref{M3} is the moment generating function. If the random variable $X\sim\chi^2(M)$, the moment generating function is defined by
\begin{align*}
\mathbb{E}(\exp(tX)) = (1-2t)^{-\frac{M}{2}}\,.
\end{align*}
Let $a = \frac{Z_{M,p}}{M}: = z_{M,p}$, where $Z_{M,p}$ is the $(1-p)-$quantile of the $\chi^2(M)$ distribution, the \eqref{M3} becomes
\begin{align*}
p = P(X\geq z_{M,p}M)\leq \frac{(1-2t)^{-\frac{M}{2}}}{\exp(tz_{M,p}M)}\,,\quad\forall t>0\,.
\end{align*}
Choosing $t = (z_{M,p}-1)/2z_{M,p}$, we have
\begin{align}\label{M4}
p = P(X\geq z_{M,p}M)\leq \left(z_{M,p}\exp(1-z_{M,p})\right)^{\frac{M}{2}}\,.
\end{align}

Considering the function $\psi(x) = x\exp(1-x)$, it is obvious that $\psi(x)$ is strictly decreasing in $(1,+\infty)$ and $\psi(1) = 1$, $\lim_{x\rightarrow+\infty}\psi(x)=0$, therefore, the equation
\begin{align*}
\psi(x) = p^{\frac{2}{M}}
\end{align*}
exists a unique solution $\overline{z}_{M,p}\in(1,+\infty)$. Recalling \eqref{M4}, the $\overline{z}_{M,p}$ can be regarded as the upper bound for $z_{M,p}$, this is because
\begin{align*}
p = \left(\overline{z}_{M,p}\exp(1-\overline{z}_{M,p})\right)^{\frac{M}{2}}\leq \left(z_{M,p}\exp(1-z_{M,p})\right)^{\frac{M}{2}}\,,
\end{align*}
which implies
\begin{align*}
z_{M,p}\leq \overline{z}_{M,p}\,.
\end{align*}
In addition, $p^{2/M}$ is increased with respect to $M$, this combining with the decreasing property of $\psi(x)$, we known $\overline{z}_{M,p}$ is also decreased with respect to $M$, which yielding \eqref{property}.
Considering the cumulative distribution function in Proposition \ref{chisquare}, it is necessary demand $F(z_{M,p}M,M)\geq F(M,M)$ such that $z_{M,p}\geq 1$. Since $F(M,M)$ is decreased with respect to $M$, and $M$ is a positive integer with $M\geq 2$. Therefore, we can demand $F(z_{M,p}M,M)\geq F(2,2)\approx0.63$, yielding
\begin{align*}
 P(X\leq z_{M,p}M) = F(z_{M,p}M,M)= 1-p\geq 0.63\,,
\end{align*}
thus $0<p<0.37$.
\end{proof}

\begin{remark}
for fixed $0<p<0.37$, since
\begin{align*}
P(X\leq \overline{z}_{M,p}M)\geq P(X\leq z_{M,p}M) =1-p\,,
\end{align*}
for $X = \frac{NM\triangle_M^2}{\sigma^2}\sim\chi^2(M)$, the estimate
\begin{align}\label{important}
\triangle_M^2\leq\frac{\overline{z}_{M,p}\sigma^2}{N}
\end{align}
is satisfied with probability of at least $1-p$, i.e.,
\begin{align*}
P(\triangle_M^2\leq\frac{\overline{z}_{M,p}\sigma^2}{N})\geq 1-p\,.
\end{align*}
\end{remark}

One important thing is how to choose the regularization parameter $\alpha$ in the functional $\Phi$ so that the minimizer ca be one possible solution of the numerical differentiation problem. Our consideration is taking $\alpha = \frac{\bar{c}\sigma^2}{N}$ with a constant $\bar{c}$. This is motivated by the results in previous work in \cite{cjw07}. On one hand, variance describes the fluctuation level of the random variable, so choosing $\alpha$ be the same order with $\sigma^2$ is an intuitive consideration. On the other hand, based on the central limit theorem, taking the sample mean on a certain interval as the observation value, the corresponding error variance will be reduced according to the speed of $1/N$, thus $\alpha$ should also reduced.

Based on the above discussions, we will establish the convergence results. The important Sobolev inequality is necessarily be given before the convergence theorem.
\begin{lemma}\label{sobolev}
Let $-\infty\leq a<b\leq\infty,1\leq p<\infty$ and $0<\varepsilon_0<\infty$, $f$ is a function that has $m-$th order derivative in $(0,1)$. There exists a constant $K$ which depends on $\varepsilon_0$, $p$, and such that for every $\varepsilon$, $0<\varepsilon\leq\varepsilon_0$, $0\leq j<m$, we have
\begin{align*}
\int_0^1 |f^{(j)}|^p dt\leq K\varepsilon\int_0^1|f^{(m)}|^pdt + K\varepsilon^{\frac{-j}{m-j}}\int_0^1|f|^pdt\,.
\end{align*}
\end{lemma}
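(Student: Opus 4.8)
The plan is to prove this intermediate–derivative inequality in three stages: a fixed–interval inequality on $(0,1)$ without the parameter, a rescaled version on short subintervals, and a final optimization over the length scale.

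\textbf{Stage 1: the homogeneous inequality on $(0,1)$.} First I would show there is a constant $C=C(m,p)$ such that
\begin{align*}
\int_0^1|f^{(j)}|^p\,dt\leq C\int_0^1|f^{(m)}|^p\,dt+C\int_0^1|f|^p\,dt\,,\qquad 0\leq j<m\,.
\end{align*}
The base case $m=2$, $j=1$ is elementary: split $(0,1)$ into three subintervals of length $1/3$; by the mean value theorem for integrals choose $\xi_L$ in the first and $\xi_R$ in the last where $f$ equals its average there, so $|f(\xi_L)|,|f(\xi_R)|\leq 3\int_0^1|f|$ and $|\xi_R-\xi_L|\geq 1/3$; then $f'(\zeta)=(f(\xi_R)-f(\xi_L))/(\xi_R-\xi_L)$ for some $\zeta$, and $f'(t)=f'(\zeta)+\int_\zeta^t f''$ gives a pointwise bound $\sup_{(0,1)}|f'|\leq C(\int_0^1|f|+\int_0^1|f''|)$; Hölder then yields the $L^p$ estimate. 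The general $0\leq j<m$ case follows by induction on $m$: apply the basic estimate to $f^{(m-2)}$ to control $\|f^{(m-1)}\|_{L^p}$ by $\|f^{(m)}\|_{L^p}$ and $\|f^{(m-2)}\|_{L^p}$, insert the inductive bound for the latter, and absorb a small multiple of $\|f^{(m-1)}\|_{L^p}$ from the right-hand side; lower-order $j$ are then handled by composing the $j=m-1$ estimate with the inductive hypothesis for $m-1$.

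\textbf{Stage 2: rescaling and summation.} For an interval $I$ of length $\ell$ set $g(t)=f(\ell t)$ on $(0,1)$; since $g^{(s)}(t)=\ell^s f^{(s)}(\ell t)$ one has $\int_0^1|g^{(s)}|^p\,dt=\ell^{sp-1}\int_I|f^{(s)}|^p$. Applying Stage 1 to $g$ and clearing the powers of $\ell$ gives
\begin{align*}
\int_I|f^{(j)}|^p\leq C'\Big(\ell^{(m-j)p}\int_I|f^{(m)}|^p+\ell^{-jp}\int_I|f|^p\Big)\,,\qquad C'=C'(m,p)\,.
\end{align*}
Partitioning $(0,1)$ into $N$ consecutive subintervals of length $\ell=1/N$ and adding these estimates, the same inequality holds with $I$ replaced by $(0,1)$.

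\textbf{Stage 3: choice of the length scale.} Given $0<\varepsilon\leq\varepsilon_0$, take $N=\lceil\varepsilon^{-1/((m-j)p)}\rceil$, so that $\ell^{(m-j)p}\leq\varepsilon$ while $\ell^{-jp}=N^{jp}$ is bounded by a constant times $\varepsilon^{-j/(m-j)}$ when $\varepsilon\leq1$. Substituting into the summed inequality produces exactly $\int_0^1|f^{(j)}|^p\leq K\varepsilon\int_0^1|f^{(m)}|^p+K\varepsilon^{-j/(m-j)}\int_0^1|f|^p$ with $K$ depending only on $m$ and $p$; for $1<\varepsilon\leq\varepsilon_0$ one uses the $\varepsilon=1$ case and absorbs the factor $\varepsilon_0^{j/(m-j)}\leq\varepsilon_0^{m-1}$ into $K$, which is why $K$ is allowed to depend on $\varepsilon_0$.

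\textbf{Main obstacle.} The delicate part is the induction in Stage 1 that yields the \emph{exact} exponent $-j/(m-j)$ rather than merely some negative power: each application of the two-level estimate must be balanced (choosing the auxiliary small parameters as fixed powers of one another) so that the exponents compose correctly, and one must check the auxiliary parameters stay below $\varepsilon_0$. The rescaling and summation in Stages 2–3 are then routine. One should also note that continuity of $f$ is implicitly used in the mean value arguments; for a general $f$ whose $m$-th derivative lies in $L^p$ one first passes to its $C^{m-1}$ representative (or mollifies and takes limits).
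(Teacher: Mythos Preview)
Your argument is correct and is essentially the standard proof of this intermediate-derivative (Ehrling/Gagliardo--Nirenberg type) inequality, as found for instance in Adams' \emph{Sobolev Spaces}. However, the paper does not supply its own proof of this lemma: it is simply stated as a known Sobolev inequality and then applied in the proof of the subsequent convergence theorem, with Adams cited in the bibliography. So there is no ``paper's proof'' to compare against; your three-stage argument (unit-interval inequality, rescaling to subintervals of length $\ell$, optimising $\ell$ to produce the exponent $-j/(m-j)$) is exactly the classical route and would serve as a complete justification where the paper gives none.

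One minor remark on presentation: in Stage~1 you phrase the induction as absorbing a small multiple of $\|f^{(m-1)}\|_{L^p}$ from the right-hand side, but the fixed-scale inequality you wrote has no small parameter to play with. The clean way is to derive the $\varepsilon$-version first for $m=2$, $j=1$ (Stages~2--3 applied to that base case), and then induct using that parametrised inequality rather than the homogeneous one. This is what you allude to in your ``main obstacle'' paragraph, so you clearly have the right idea; just make sure the write-up orders the steps accordingly.
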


\begin{theorem}\label{main}
Suppose $y=y(x)\in H^k(0,1)$ is a function defined on $[0,1]$, given the noisy observation samples $\widetilde{y}_j= y(\hat x_j)+\eta_{j}$ with $1\leq j\leq L$ of the function $y(x)$ satisfying
 \begin{align}
\eta_i \sim N(0,\sigma^2)\,.
\end{align}
Dividing the samples into $M$ groups as in \eqref{G} and define the new grid $\triangle$ as in \eqref{newgrid}. Let $\widetilde{Y}_i$ with $1\leq i\leq M$ and statistics $\triangle_M^2$ are defined in \eqref{functional} and \eqref{triangle} respectively.  For fixed $0<p<0.37$, let $\bar{z}_{M,p}$ be the unique solution for the function $x\exp(1-x) = p^{2/M}$ on interval $(1,+\infty)$ defined in Lemma \ref{chi}, then $\triangle_M^2$ satisfies
\begin{align*}
\triangle_M^2
\leq\frac{\overline{z}_{M,p}\sigma^2}{N}
\end{align*}
with the probability of at least $1-p$. Let $e = f_*-y$, when choosing the regularization parameter $\alpha = \frac{\bar{c}\sigma^2}{N}$, the $L^2$ norm of the $e$ and $e^{(k)}$ satisfies the following estimates
\begin{align}
&\|e^{(k)}\|\leq \sqrt{\frac{2\overline{z}_{M,p}}{\bar c}+\frac{2Q^2h_M^2}{\bar{c}N\sigma^2}} + 2\|y^{(k)}\|\,,\label{estimate11}\\
&\|e\|\leq h\|e'\|+\sqrt{\frac{8\overline{z}_{M,p}\sigma^2+2\bar{c}\sigma^2\|y^{(k)}\|^2}{N}+\frac{8Q^2h_M^2}{N^2}}\label{estimate12}\,,
\end{align}
with the probability of at least $1-p$. Therefore, for fixed $0\leq j\leq k-1$, assume $\sigma^2>h_M^2/N$, pthe $L^2$ norm of $e^{(j)}$ satisfying the estimate
\begin{align}\label{final}
\|e^{(j)}\|_2\leq C_1 h_M^{k-j}+C_2\left(\frac{\sigma^2}{N}\right)^{(k-j)/2k}\,
\end{align}
with the probability of at least $1-p$. The constants $C_1$ and $C_2$ are two constants independent of $h_M$, $\sigma$ and $N$.
\end{theorem}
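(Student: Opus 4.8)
The statement is a synthesis of what has already been set up, so the plan is to read \eqref{final} off from \eqref{estimate1}, \eqref{estimate2}, Lemma \ref{chi}, and the generalized Sobolev inequality of Lemma \ref{sobolev}. First, the probabilistic part is immediate. Since the $\widetilde Y_i-Y_i$ are independent and $\mathcal N(0,\sigma^2/N)$, the scaled statistic $\tfrac{MN}{\sigma^2}\triangle_M^2$ has a $\chi^2(M)$ law, so by Lemma \ref{chi} (and the remark following it, i.e.\ \eqref{important}) one gets $P\big(\triangle_M^2\le \overline{z}_{M,p}\sigma^2/N\big)\ge P\big(\triangle_M^2\le z_{M,p}\sigma^2/N\big)=1-p$. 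From here on I would work on the event $\{\triangle_M^2\le\overline{z}_{M,p}\sigma^2/N\}$, which carries probability at least $1-p$; everything below is deterministic. I would also note that, by the monotonicity in Lemma \ref{chi} together with $M>k$, one has $\overline{z}_{M,p}\le\overline{z}_{k+1,p}$, so every occurrence of $\overline{z}_{M,p}$ can be replaced by a constant depending only on $k$ and $p$.

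Substituting $\triangle_M^2\le\overline{z}_{M,p}\sigma^2/N$ and $\alpha=\overline c\,\sigma^2/N$ into \eqref{estimate1} and \eqref{estimate2} produces \eqref{estimate11} and \eqref{estimate12} directly, the $\alpha$ in the denominators cancelling the $\sigma^2/N$ in the numerators; this is pure algebra and is the routine part.

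For \eqref{final} the plan has two stages: (i) reduce \eqref{estimate11}–\eqref{estimate12} to clean a priori bounds, and (ii) interpolate. For (i), the hypothesis $\sigma^2>h_M^2/N$ gives $\tfrac{Q^2h_M^2}{\overline cN\sigma^2}<\tfrac{Q^2}{\overline c}$, so \eqref{estimate11} yields $\|e^{(k)}\|\le A$ with $A$ independent of $h_M,\sigma,N$. In \eqref{estimate12} one must dispose of the self-referential term $h\|e'\|$: apply Lemma \ref{sobolev} with $j=1$, $m=k$, $p=2$ at a fixed $\varepsilon=\varepsilon_0$, namely $\|e'\|\le\sqrt{K\varepsilon_0}\,A+\sqrt K\,\varepsilon_0^{-1/(2(k-1))}\|e\|$, multiply by $h=h_M/N$, and absorb the resulting $\|e\|$-contribution on the left (legitimate since $h$ is small, which is where $\sigma^2>h_M^2/N$ is genuinely used again); this leaves $\|e\|^2\le c_1 h_M^2/N^2+c_2\,\sigma^2/N$ for constants $c_1,c_2$. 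For (ii), the case $j=0$ is just (i) combined with \eqref{estimate12}, while for $1\le j\le k-1$ Lemma \ref{sobolev} with $m=k$, $p=2$, $f=e$ gives, for every $0<\varepsilon\le\varepsilon_0$,
\[
\|e^{(j)}\|^2\le K\varepsilon\|e^{(k)}\|^2+K\varepsilon^{-j/(k-j)}\|e\|^2\le K\varepsilon A^2+K\varepsilon^{-j/(k-j)}\big(c_1 h_M^2/N^2+c_2\sigma^2/N\big).
\]
Choosing $\varepsilon\asymp\big(h_M^2/N^2+\sigma^2/N\big)^{(k-j)/k}$, truncated at $\varepsilon_0$ when this quantity is not small, balances the two leading contributions; taking square roots and using $(a+b)^{\theta}\le a^{\theta}+b^{\theta}$ for $0<\theta\le1$ then yields a bound of the form $\|e^{(j)}\|\le C_1 h_M^{k-j}+C_2\big(\sigma^2/N\big)^{(k-j)/2k}$, with $C_1,C_2$ built only from $k,j,p,\overline c,Q,\varepsilon_0,K,\|y^{(k)}\|$, and in the truncated regime $\sigma^2/N$ (hence $\|e^{(j)}\|$) is bounded above by an absolute constant so the inequality persists after enlarging $C_2$.

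The main obstacle is stage (ii): optimising $\varepsilon$ while respecting the constraint $\varepsilon\le\varepsilon_0$ and checking that all the constants really are independent of $h_M,\sigma,N$, together with making the absorption of $h\|e'\|$ in stage (i) rigorous. I would also scrutinise the exponent $k-j$ on $h_M$: merely interpolating the crude bounds on $\|e\|$ and $\|e^{(k)}\|$ delivers the discretisation term with exponent $(k-j)/k$, and getting exponent $k-j$ (as written in \eqref{final}) appears to require controlling the averaging error $Qh_M/N$ via a best piecewise-polynomial comparison for $y$ instead of through the generic bound used for \eqref{estimate2} — so that is where I would focus the careful work.
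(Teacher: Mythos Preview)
Your outline departs from the paper in the step that matters, and as written it does not reach the exponent $h_M^{k-j}$ in \eqref{final}.  Two points:

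\emph{First, the $h$ in \eqref{estimate12} is $h_M$, not $h_M/N$.}  The term arises from Lemma~\ref{lem Q_{h_M}}, where $Q_{h_M}$ is the projection onto piecewise constants on the coarse $M$-grid; the bound is $\|g-Q_{h_M}g\|\le h_M\|g'\|$, and the paper uses $h_M$ explicitly when it resumes the argument.  Your absorption step therefore requires $h_M\cdot\sqrt K\,\varepsilon_0^{-1/(2(k-1))}<1$, which is not guaranteed by $\sigma^2>h_M^2/N$, and even when it holds you end with $\|e\|^2\le c_1h_M^2+c_2\sigma^2/N$ (an $h_M^2$ term, not $h_M^2/N^2$).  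Interpolating this against $\|e^{(k)}\|\le A$ then yields only $\|e^{(j)}\|\lesssim h_M^{(k-j)/k}+(\sigma^2/N)^{(k-j)/2k}$, which is the weaker exponent you yourself flag at the end.

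\emph{Second, the fix is not a sharper averaging bound but a bootstrapping argument.}  The paper does \emph{not} first resolve the self-reference in \eqref{estimate12}.  Instead it takes $\varepsilon=\|e\|^{2(k-1)/k}$ in Lemma~\ref{sobolev} (shown for $j=1$) to get $\|e'\|^2\le K(\|e^{(k)}\|^2+1)\|e\|^{2(k-1)/k}$, and then substitutes the still-implicit bound \eqref{estimate12} for $\|e\|$, obtaining after raising to the $k/2$ power
\[
\|e'\|^k\;\le\;K^*\Big(h_M^{\,k-1}\|e'\|^{k-1}+B^{\,k-1}\Big),\qquad B^2\asymp \frac{\sigma^2}{N}+\frac{h_M^2}{N^2}.
\]
A case split finishes: if $\|e'\|\le K^* h_M^{k-1}$ one is done; otherwise set $r=\|e'\|-K^*h_M^{k-1}>0$, observe $r^k\le \|e'\|^{k-1}r=\|e'\|^k-K^*h_M^{k-1}\|e'\|^{k-1}\le K^*B^{k-1}$, whence $r\le C(\sigma^2/N)^{(k-1)/2k}$ using $h_M^2/N^2<\sigma^2/N$.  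This is precisely what produces the exponent $k-1$ (and, by the analogous computation, $k-j$) on $h_M$.  So the missing idea in your plan is to keep $h_M\|e'\|$ inside the Sobolev estimate and extract the rate by this algebraic two-case trick, rather than absorbing it beforehand.
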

\begin{proof}
The \eqref{estimate11} and \eqref{estimate12} is directly obtained from \eqref{estimate1} and \eqref{estimate2} by taking the estimate $\triangle_M^2$ and $\alpha$. We then prove the \eqref{final} for the case $j=1$, i.e.,
\begin{align*}
\|e'\|_2\leq C_1h_M^{k-1}+C_2\left(\frac{\sigma^2}{N}\right)^{\frac{k-1}{2k}}\,,
\end{align*}
Taking $j=1$, $m=k$, $p=2$, $\varepsilon_0=1$ and $f=e$ in Lemma \ref{sobolev}, we can obtain
\begin{align*}
\|e'\|^2\leq K\varepsilon\|e^{(k)}\|^2+K\varepsilon^{-\frac{1}{k-1}}\|e\|^2\,.
\end{align*}
Without losing the generality, we assume that $\|e\|^{2(k-1)/k}\leq\varepsilon_0=1$. Taking $\varepsilon = \|e\|^{2(k-1)/k}$, it follows that
\begin{align*}
\|e'\|^2\leq K(\|e^{(k)}\|^2+1)\|e\|^{2(k-1)/k}\,.
\end{align*}
Referring to the estimates for both $\|e\|$ and $\|e^{(k)}\|$ in \eqref{estimate11} and \eqref{estimate12}, we have
\begin{align*}
\|e'\|^2\leq K\left(h\|e'\|+\sqrt{\frac{8\overline{z}_{M,p}\sigma^2+2\bar c\sigma^2\|y^{(k)}\|^2}{N}+\frac{8Q^2h_M^2}{N^2}}\right)^{2(k-1)/k}
\left(1+\frac{2\overline{z}_{M,p}}{\bar c}+\frac{2Q^2h_M^2}{\bar cN\sigma^2}+4\|y^{(k)}\|^2\right)\,.
\end{align*}
It is equivalently that
\begin{align*}
\|e'\|^k&\leq K^{k/2}\left(h_M\|e'\|+\sqrt{\frac{8\overline{z}_{M,p}\sigma^2+2\bar c\sigma^2\|y^{(k)}\|^2}{N}+\frac{8Q^2h_M^2}{N^2}}\right)^{k-1}
\left(1+\frac{2\overline{z}_{M,p}}{\bar c}+\frac{2Q^2h_M^2}{\bar cN\sigma^2}+4\|y^{(k)}\|^2\right)^{k/2}\\
&\leq K^*\left(1+\frac{2\overline{z}_{M,p}}{\bar c}+\frac{2Q^2h_M^2}{\bar cN\sigma^2}+4\|y^{(k)}\|^2\right)^{k/2}\\
&\qquad\cdot\left(\|e'\|^{k-1}h_M^{k-1}+\left(\frac{\left(8\overline{z}_{M,p}+2\bar c\|y^{(k)}\|^2\right)\sigma^2}{N}+\frac{8Q^2h_M^2}{N^2}\right)^{(k-1)/2}\right)\,,
\end{align*}
in which the constant $K^* := 2^{k-1}K^{k/2}$.

Next we will show that \eqref{final} can be obtained from the above estimate. We consider two cases.

{\bf Case1}: Assume $\|e'\|\leq K^*\left(1+\frac{2\overline{z}_{M,p}}{\bar c}+\frac{2Q^2h_M^2}{\bar cN\sigma^2}+4\|y^{(k)}\|^2\right)^{k/2}h_M^{k-1}$, since $\overline{z}_{M,p}$ can be bounded, yielding
\begin{align*}
\|e'\|&\leq K^*\left(K_1+K_2\frac{h_M^2}{N\sigma^2}\right)^{k/2}h_M^{k-1}\leq C_1h_M^{k-1}\,,
\end{align*}
in which the constants $K_1: = 1+\frac{2\overline{z}_{M,p}}{\bar c}+4\|y^{(k)}\|^2$, $K_2 = 2Q^2/\bar c$ and $C_1 = K^*(K_1+K_2)^{k/2}$. The last inequality is based on the assumption $\sigma^2>h_M^2/N$.

{\bf Case2}: Assume $\|e'\|> K^*\left(1+\frac{2\overline{z}_{M,p}}{\bar c}+\frac{2Q^2h_M^2}{\bar cpN\sigma^2}+4\|y^{(k)}\|^2\right)^{k/2}h_M^{k-1}$. Then we can take
\begin{align*}
r = \|e'\|-K^*\left(1+\frac{2\overline{z}_{M,p}}{\bar c}+\frac{2Q^2h_M^2}{\bar cN\sigma^2}+4\|y^{(k)}\|^2\right)^{k/2}h_M^{k-1}>0\,,
\end{align*}
then,
\begin{align*}
r^k&\leq \|e'\|^{k-1}r\\
&\leq \|e'\|^k-K^*\left(1+\frac{2\overline{z}_{M,p}}{\bar c}+\frac{2Q^2h_M^2}{\bar cN\sigma^2}+4\|y^{(k)}\|^2\right)^{k/2}h_M^{k-1}\|e'\|^{k-1}\\
&\leq K^*\left(1+\frac{2\overline{z}_{M,p}}{\bar c}+\frac{2Q^2h_M^2}{\bar cN\sigma^2}+4\|y^{(k)}\|^2\right)^{k/2}\left(\frac{\left(8\overline{z}_{M,p}+2\|y^{(k)}\|^2\right)\sigma^2}{N}+\frac{8Q^2h_M^2}{N^2}\right)^{(k-1)/2}\\
&\leq C_1
\left(K_3\frac{\sigma^2}{N}+K_4\frac{h_M^2}{N^2}\right)^{(k-1)/2}\\
&\leq C_2^k\frac{\sigma^{k-1}}{N^{(k-1)/2}}\,,
\end{align*}
where the constants $K_3 = 8\overline{z}_{M,p}+2\|y^{(k)}\|^2$, $K_4 = 8Q^2$ and $C_2^k = C_1\max(K_3,K_4)^{(k-1)/2}$.
Consequently,
\begin{align*}
r \leq C_2\left(\frac{\sigma^2}{N}\right)^{(k-1)/2k}\,.
\end{align*}
The inequality \eqref{final} for $j=1$ is proved. By a similar method, we can prove for any $0\leq j<k$, it holds that
\begin{align*}
\|e^{(j)}\|_2\leq C_1 h_M^{k-j}+C_2\left(\frac{\sigma^2}{N}\right)^{(k-j)/2k}\,.
\end{align*}
We will not give the detailed proof here.

\end{proof}

\begin{remark}\label{remark}
For particular $k=2$, the estimate \eqref{final} becomes
\begin{align*}
\|e'\|_2\leq C_1 h_M+C_2\left(\frac{\sigma^2}{N}\right)^{1/4}\,.
\end{align*}
When $\sigma$ is fixed, pay attention that $h_M = N/J$, this motivate us that $N = \mathcal{O}(J^{4/5})$ and $M = \mathcal{O}(J^{1/5})$ is the optimal choice.
\end{remark}
\section{Algorithm}
\setcounter{equation}{0}
We consider the simple case $k=2$. The other cases can be treated in a similar way. Since $f_*$ is a piece wise polynomial of degree four, we assume that
\begin{align}\label{fstar}
f_*(x)  = a_i+b_i(x-x_i)+c_i(x-x_i)^2+d_i(x-x_i)^3+e_i(x-x_i)^4,\quad\textrm{for}\ x\in[x_i,x_{i+1})\,,
\end{align}
where there are $5M$ constants $a_i$, $b_i$, $c_i$, $d_i$, $e_i$ for $i=0,\ldots,M-1$.

From our reconstruction, we have
\begin{align*}
\frac{c_{i-1}-2c_i+c_{i+1}}{3h} = 3(e_{i-1}+e_i)h_M,\quad\textrm{for}\ i=1,\ldots,M-1\,.
\end{align*}
Since $f''(0)=f''(1)=0$ yields $c_0 = c_M=0$, denote $c = (c_1,\ldots,c_{M-1})^T$ and $e = (e_0,e_1,\ldots,e_{M-1})^T$, we have
\begin{align}\label{c}
c = 6h^2(A_1)^{-1}B_1e : = Ce\,,
\end{align}
with
\begin{align}\label{c1}
A_1 = \left(
\begin{matrix}
-2 &1 &0 &\cdots\\
1 &-2 &1 &\cdots\\
\vdots &\vdots &\vdots &\vdots\\
\cdots &1 &-2 &1\\
\cdots &\cdots &1 &-2\\
\end{matrix}\right)_{(M-1)\times(M-1)} B_1 = \left(
\begin{matrix}
1 &1 &0 &\cdots\\
0 &1 &1 &\cdots\\
\vdots &\vdots &\vdots &\vdots\\
\cdots &\cdots &1 &1\\
\end{matrix}\right)_{(M-1)\times M}\,.
\end{align}

Then, since
\begin{align*}
d_{i-1}  = \frac{1}{3h_M}\left(c_i-c_{i-1}-6e_{i-1}h_M^2\right)\,,
\end{align*}
denote $d = (d_0,d_1\ldots,d_{M-1})^T$, it follows that
\begin{align}\label{d}
d = \frac{1}{3h}Tc-2he: = De\,,\textrm{where}\quad T = \left(
\begin{matrix}
1 &0 &0 &\cdots\\
-1 &1 &0 &\cdots\\
\vdots &\vdots &\vdots &\vdots\\
\cdots &\cdots &0 &-1\\
\end{matrix}\right)_{M\times(M-1)}
\end{align}

Next, since
\begin{align*}
a_{i-1}-2a_i+a_{i+1} = (c_{i-1}+c_i)h_M^2+(2d_{i-1}+d_i)h_M^3+(3e_{i-1}+e_i)h_M^4,\quad\textrm{for}\ i=1,\ldots,M-1\,,
\end{align*}
this combine with $a_0 = f(0)$ and $a_M = f(1)$, denote $a = (a_1,\ldots,a_{M-1})^T$, it follows that
\begin{align}\label{a}
A_1 a = h_M^2Pc+h_M^3Qd+h_M^3Re-\frac{1}{h_M}v_1: = Ae-\bar{v}_1\,,
\end{align}
where
\begin{align}\label{d1}
&P = \left(
\begin{matrix}
1 &0 &0 &\cdots\\
1 &1 &0 &\cdots\\
\vdots &\vdots &\vdots &\vdots\\
\cdots &\cdots &1 &1\\
\end{matrix}\right)_{(M-1)\times(M-1)}
Q = \left(
\begin{matrix}
2 &1 &0 &\cdots\\
0 &2 &1 &\cdots\\
\vdots &\vdots &\vdots &\vdots\\
\cdots &\cdots &2 &1\\
\end{matrix}\right)_{(M-1)\times M} \nonumber\\
&R = \left(
\begin{matrix}
3 &1 &0 &\cdots\\
0 &3 &1 &\cdots\\
\vdots &\vdots &\vdots &\vdots\\
\cdots &\cdots &3 &1\\
\end{matrix}\right)_{(M-1)\times M}
v_1 = \left(\begin{matrix}
f(0)\\
0\\
\vdots\\
0\\
f(1)\\
\end{matrix}\right)_{(M-1)\times 1}
\end{align}

In addition, since
\begin{align*}
b_{i-1} = \frac{1}{h_M}(a_i-a_{i-1}) - c_{i-1}h_M-d_{i-1}h_M^2-e_{i-1}h_M^3\,,\quad\textrm{for}\ i=1\ldots M-1
\end{align*}
Therefore,
\begin{align}\label{b}
b = \frac{1}{h_M}Ta-h_M\left(\begin{matrix}
0\\
c\\
\end{matrix}\right)-h_M^2d-h_M^3e-\frac{1}{h_M}v_2: = Be-\bar{v}_2\,,
\end{align}
where
\begin{align}\label{b1}
R = \left(
\begin{matrix}
1 &0 &0 &\cdots\\
-1 &1 &0 &\cdots\\
\vdots &\vdots &\vdots &\vdots\\
\cdots &\cdots &0 &-1\\
\end{matrix}\right)_{M\times (M-1)} v_2 = \left(\begin{matrix}
f(0)\\
0\\
\vdots\\
0\\
-f(1)\\
\end{matrix}\right)_{M\times 1}
\end{align}

Finally, let $e=(e_0,e_1,\ldots,e_{M-1})^T$, we have
\begin{align}\label{e}
24\alpha e = \widetilde{Y}-\left(\begin{matrix}
f(0)\\
a\\
\end{matrix}\right)-\frac{h_M}{2}b-\frac{h_M^2}{3}\left(\begin{matrix}
0\\
c\\
\end{matrix}\right)-\frac{h_M^3}{4}d -\frac{h_M^4}{5}e\,.
\end{align}
This equivalent to
\begin{align}\label{e1}
\left(\left(24\alpha+\frac{h_M^4}{5}\right)I + \frac{h_M}{2}B+\frac{h_M^3}{4}D\right)e+\left(\begin{matrix}
0\\
\left(A+\frac{h_M^2}{3}C\right)e\\
\end{matrix}\right) = \widetilde{Y}+\left(\begin{matrix}
-f(0)\\
\bar{v}_1\\
\end{matrix}\right)+\frac{h_M}{2}\bar{v}_2\,.
\end{align}
By solving \eqref{e1} and using the matrix relations, we can get the coefficient vectors.
\section{Numerical example}
\setcounter{equation}{0}
In this section, some numerical examples are provided to illustrate computational performance of the method. The regularization parameter is selected by {\it a posteriori} choice L-curve criterion. We divide $[0,1]$ into $L = 1000$ equal subintervals. For a given function $y = y(x)= x^3+2x^2-0.5x+1$, we add random noise at each points with normal distribution $N(0,\sigma^2)$ with $\sigma^2=0.2$ and generate corresponding noisy observations, see the Figure \ref{figure1}~(left). We divided the points into $M=10$ groups, so there are $N=100$ points in every group. We generate the $\{\widetilde{Y}_i\}_{i=1}^M$ be the sample mean of these $N$ observations, see Figure \ref{figure1}~(right).
\begin{figure}[htp]
  \begin{center}
  \includegraphics[width=6cm,height=6.5cm]{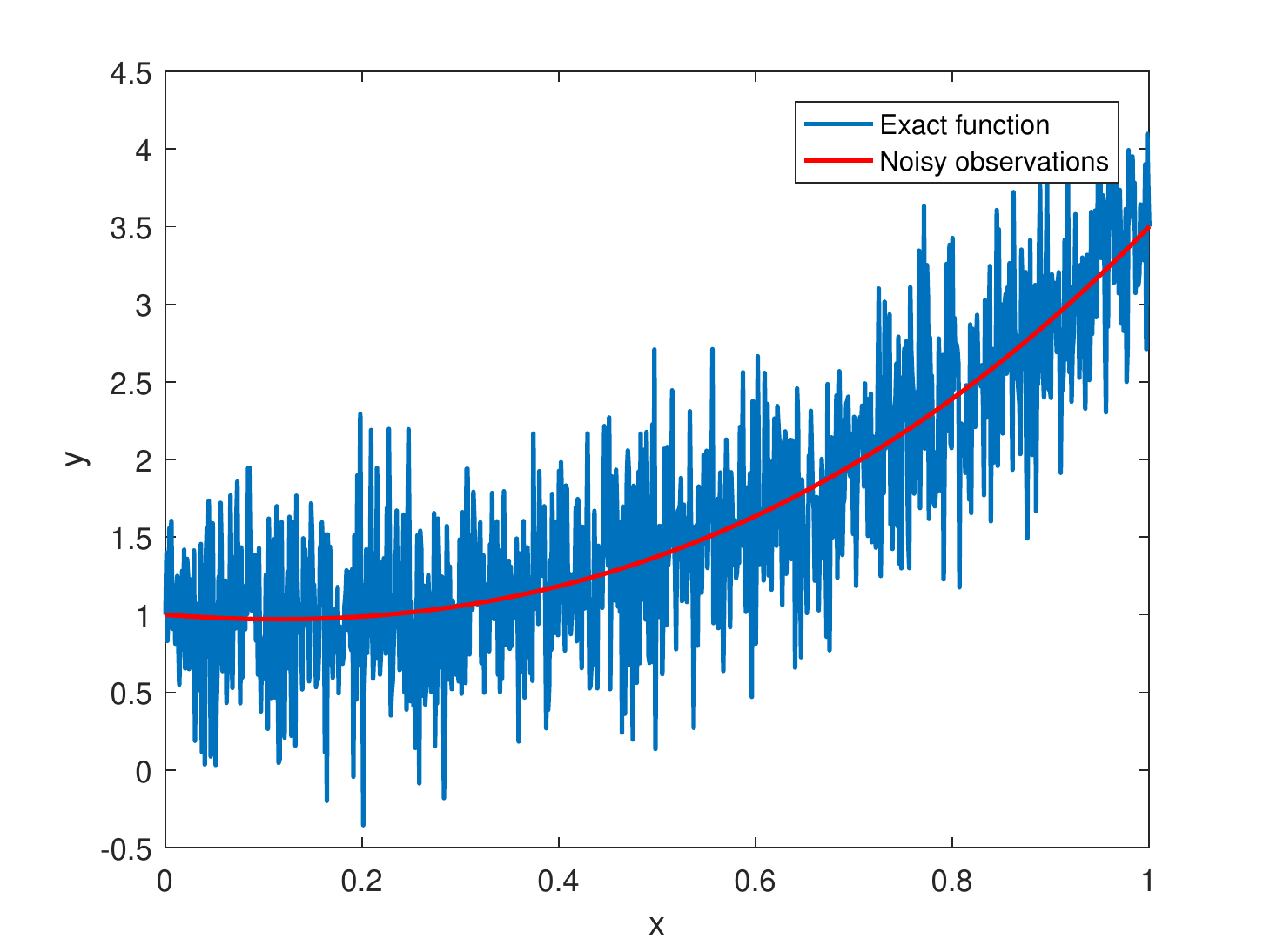}
  \includegraphics[width=6cm,height=6.5cm]{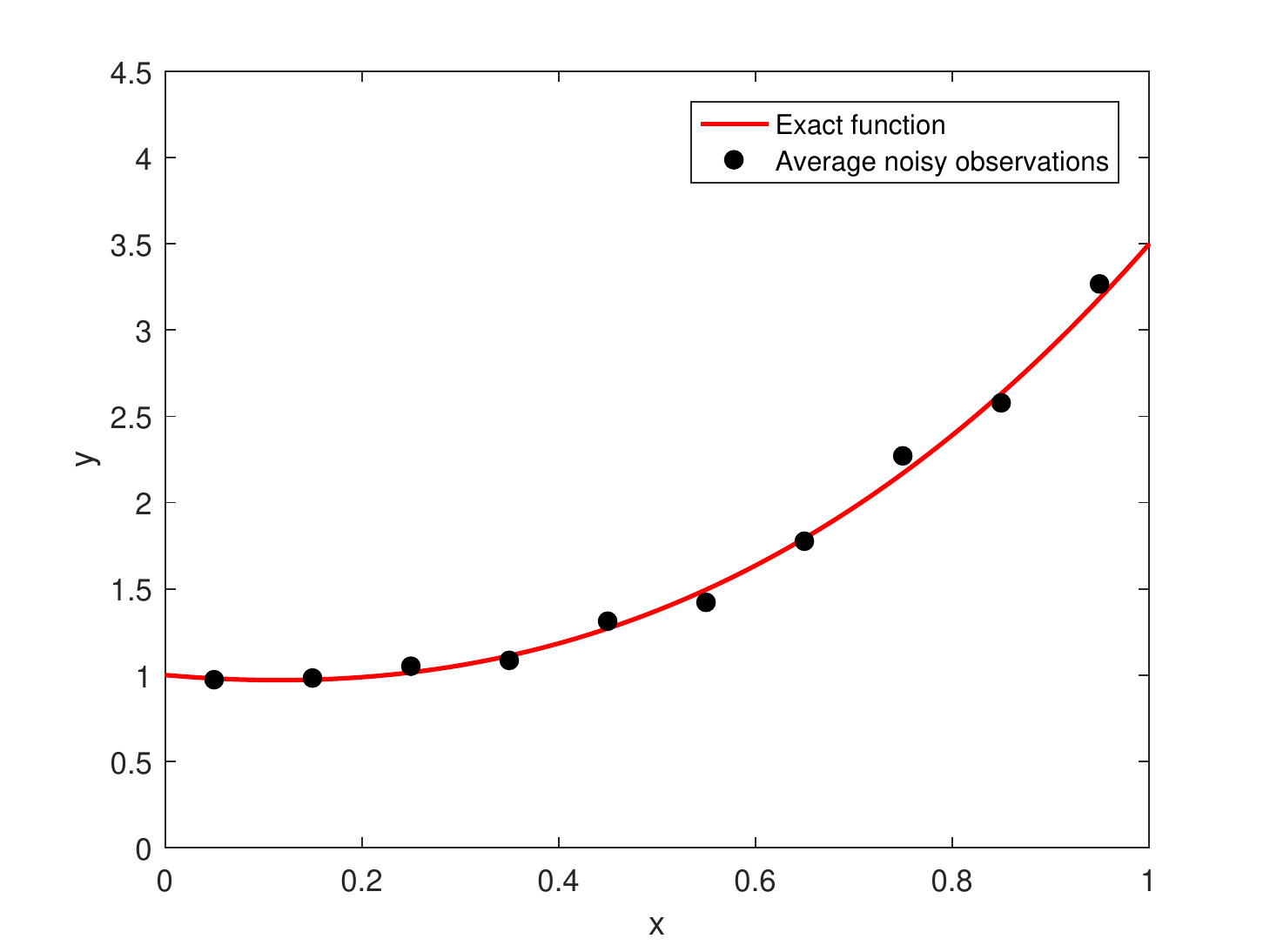}
  \end{center}
  \caption{The noisy observations $\sigma^2=0.2$ (left), the average noisy observations with $M=10$ (right). \label{figure1}}
\end{figure}

In our numerical example, we fix $k=2$ and do the algorithm in section 4, the regularization parameter $\alpha$ was suggested be chosen as $\alpha = \frac{\bar{c}\sigma^2}{N}$, now we illustrate the performance of the heuristic parameter L-curve strategy for determining the constant $\bar\alpha$. We plot the curve $(\log(\|(f_*)'\|_{L^2(0,1)}^2),\log(\textrm{residual}))$ with $$\textrm{residual}:=\sqrt{\frac{1}{M}\sum_{i=1}^M\left(\widetilde{Y}_i-M_i(f)\right)^2}\,.$$
The curve indeed looks like the letter ``L'', and we can get the corresponding constant $\bar c=0.0239$.
\begin{figure}[htp]
  \begin{center}
  \includegraphics[scale=0.7]{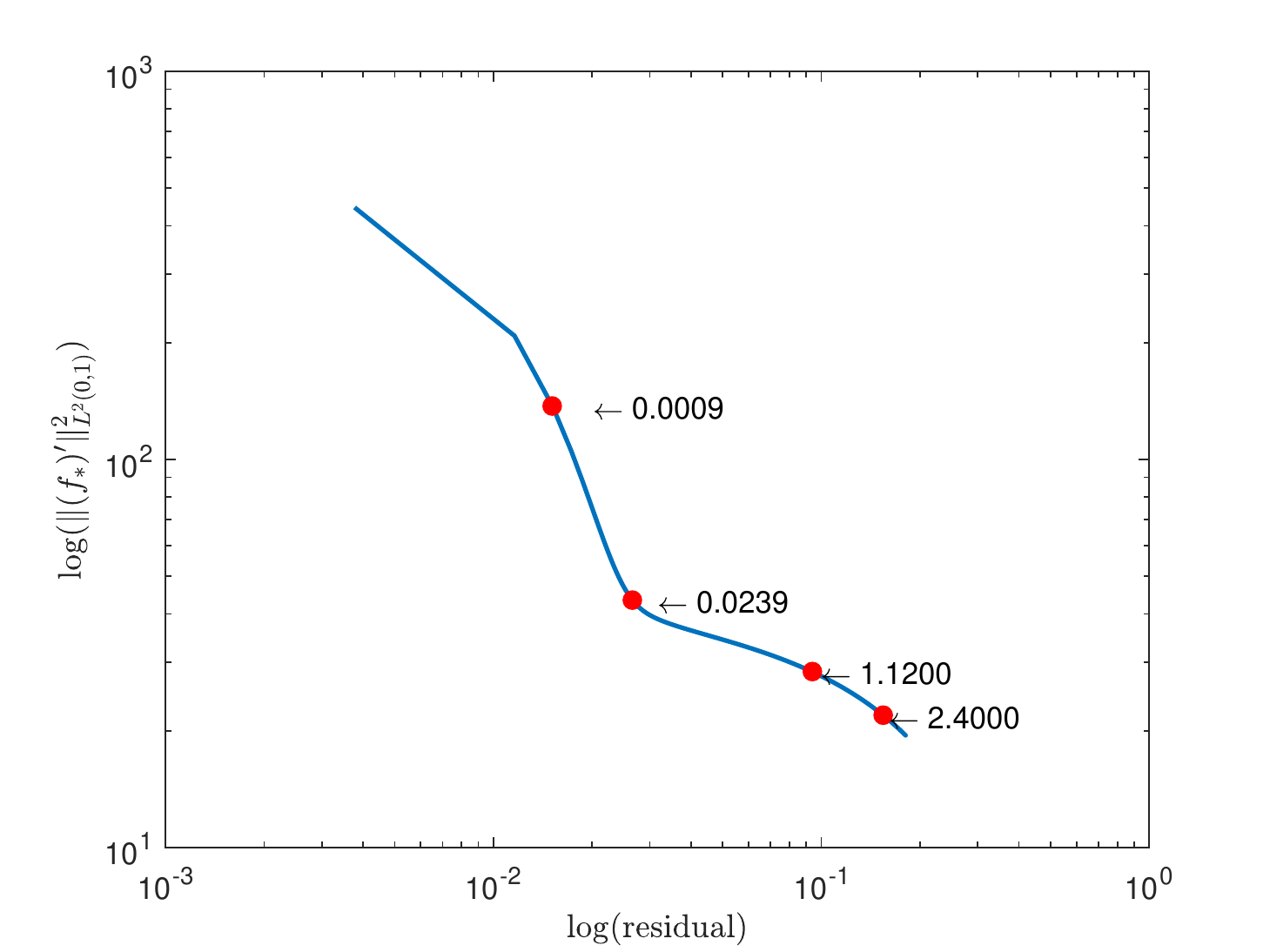}
  \end{center}
  \caption{The loglog figure of the Lcurve. \label{figure2}}
\end{figure}
Then, we present the computational performance of the method and the regularization parameter is chosen in terms of L-curve method, the reconstruction for $y(x)$ and $y'(x)$ are showed in Figure \ref{figure3} respectively. Here and in what follows, the blue curve means the exact function or the exact first derivative, while the red curve means the reconstructions.
\begin{figure}[htp]
  \begin{center}
  \includegraphics[width=6cm,height=6.5cm]{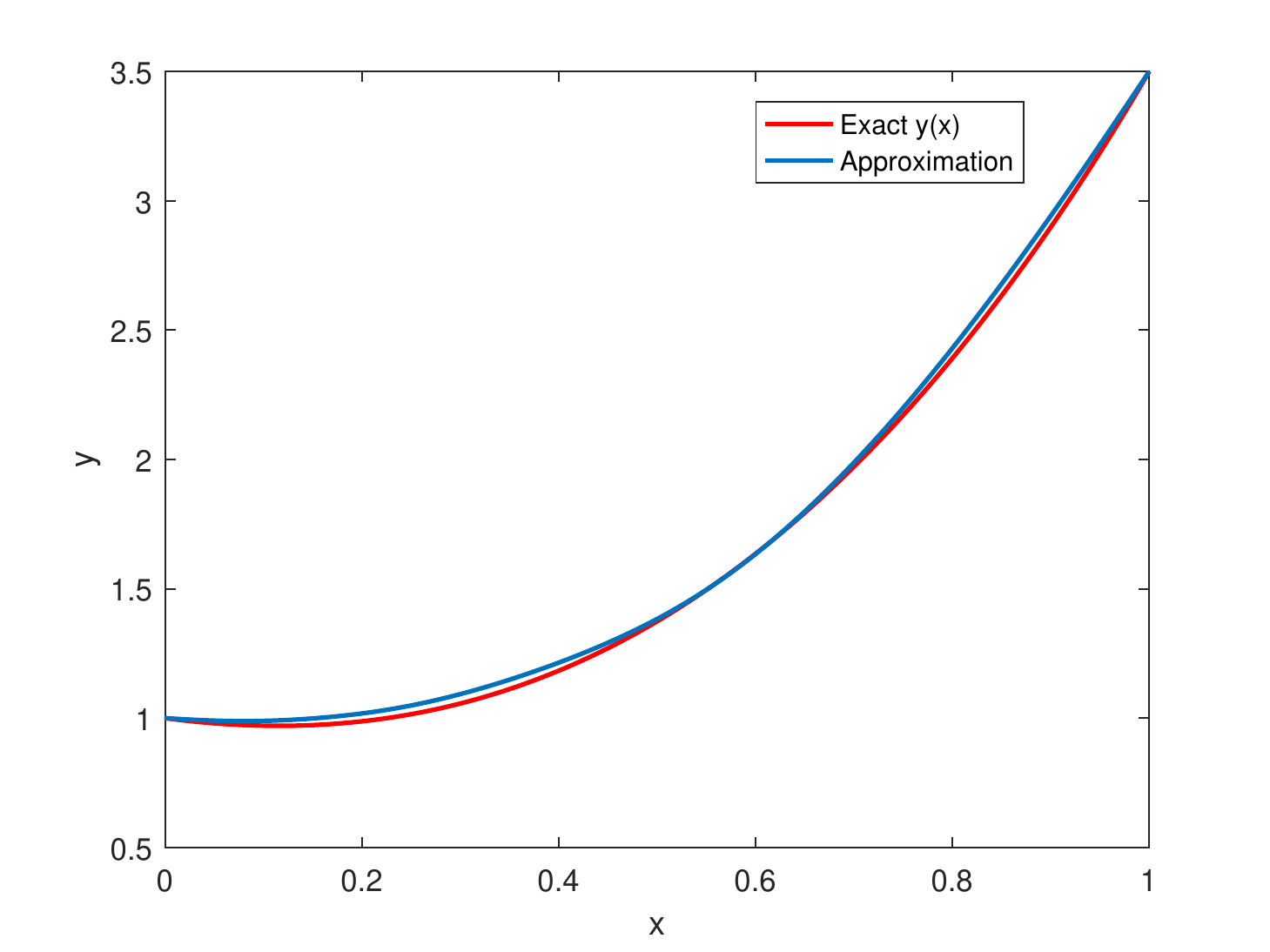}
   \includegraphics[width=6cm,height=6.5cm]{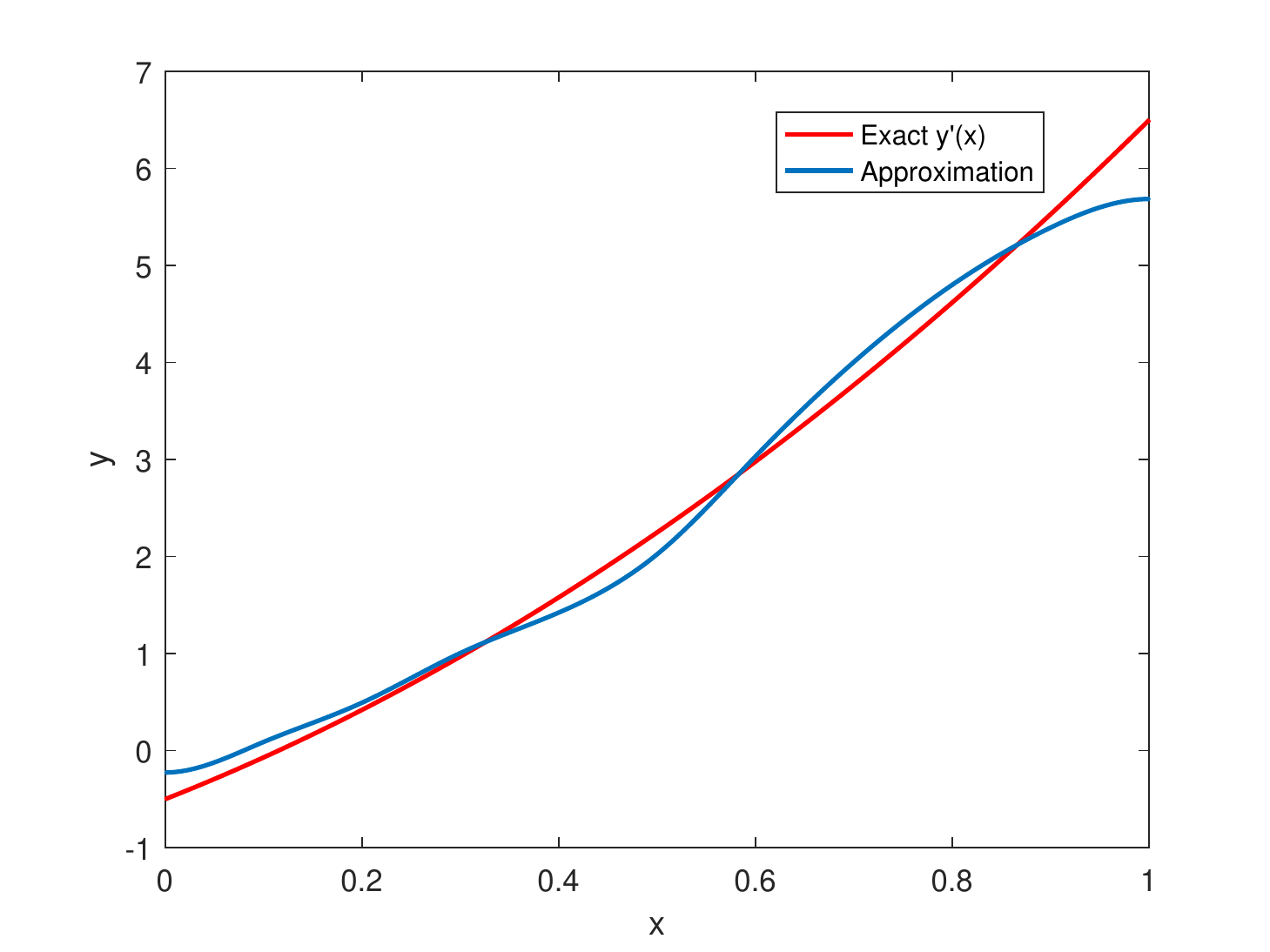}
  \end{center}
  \caption{The reconstruction of $y(x)$ (left) and the reconstruction of $y'(x)$ (right) with $M=10$. \label{figure3}}
\end{figure}

Next, we compare the reconstruction results under different choices for $M$, recalling the Remark \ref{remark}, the optimal choice is $M = \mathcal{O}(J^{1/5})$, which means $M$ is approximately $5$. we choose $M=5,10,50,100,200$ for comparison, in all situations, we fix the same constant $\bar c = 0.0239$ and thus, due to different $N$, the $\alpha = \bar c\sigma^2/N$ is also different. The detailed information was shown in Table \ref{table1}. It is also necessary to notice that, the computational complexity was based on the value of $M$, smaller $M$ represents smaller matrices sizes and thus cheaper computational costs.
\begin{table}[h]
  \caption{The reconstruction comparison for different choices for $M$}\label{table1}
    \begin {center}
\begin{tabular}{llcccc}
      \hline
$M$  & $N$     &  $\|y-f_*\|_{L^2(0,1)}$  & $ \|y-f_*\|_{l^\infty}$ &  $\|y'-f_*'\|_{L^2(0,1)}$  & $\|y'-f_*'\|_{l^\infty}$          \\
\hline
5    & 200    &  0.020805   &  0.036963  & 0.166882 & 0.745254 \\
10   & 100    &  0.027061   &  0.045420  & 0.211428 & 0.815453 \\
50   & 20     &  0.040842   &  0.067272  & 0.249623 & 1.023243 \\
100  & 10     &  0.054420   &  0.085524  & 0.287166 & 1.150439 \\
200  & 5     &  0.079110   &  0.116156  & 0.353859 & 1.333828\\
   \hline
    \end{tabular}\\[5mm]
    \end{center}
\end{table}

Finally, we compare our method with the previous algorithm in \cite{cjw07}, in which they discussed the approximation provided that the noisy observations satisfying $\|y(x_j)-y^\delta_j\|\leq\delta$ for $1\leq j\leq L$, and the noiselevel $\delta$ should be known {\it a priorily} and the regularization parameter was suggested be chosen as $\alpha = \delta^2$. In this example, since the noise was given as a normal distribution with $\sigma^2$ be the variance, it is appropriate to choose $\alpha = \sigma^2$.
\begin{figure}[htp]
  \begin{center}
  \includegraphics[width=6cm,height=6.5cm]{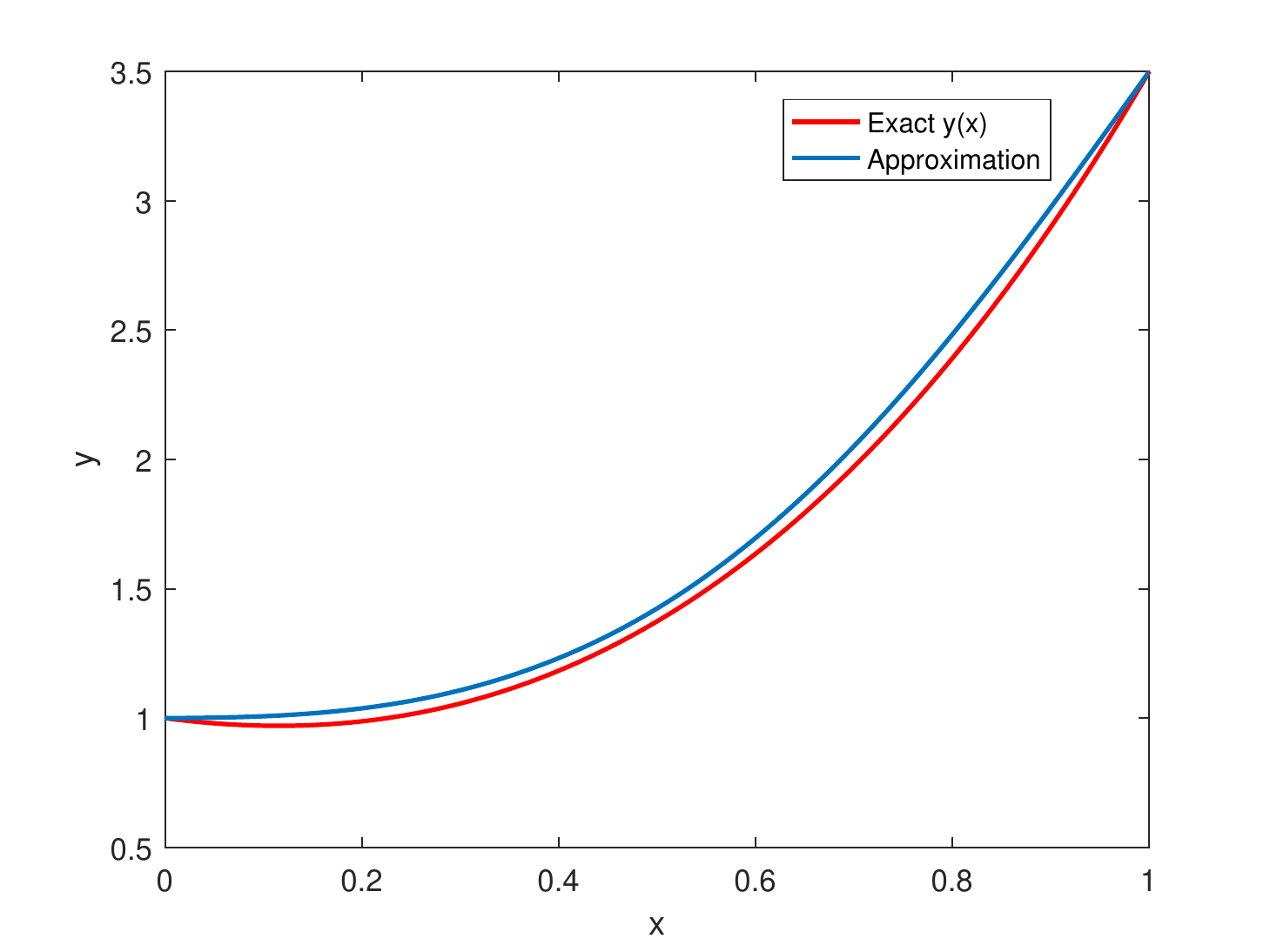}
   \includegraphics[width=6cm,height=6.5cm]{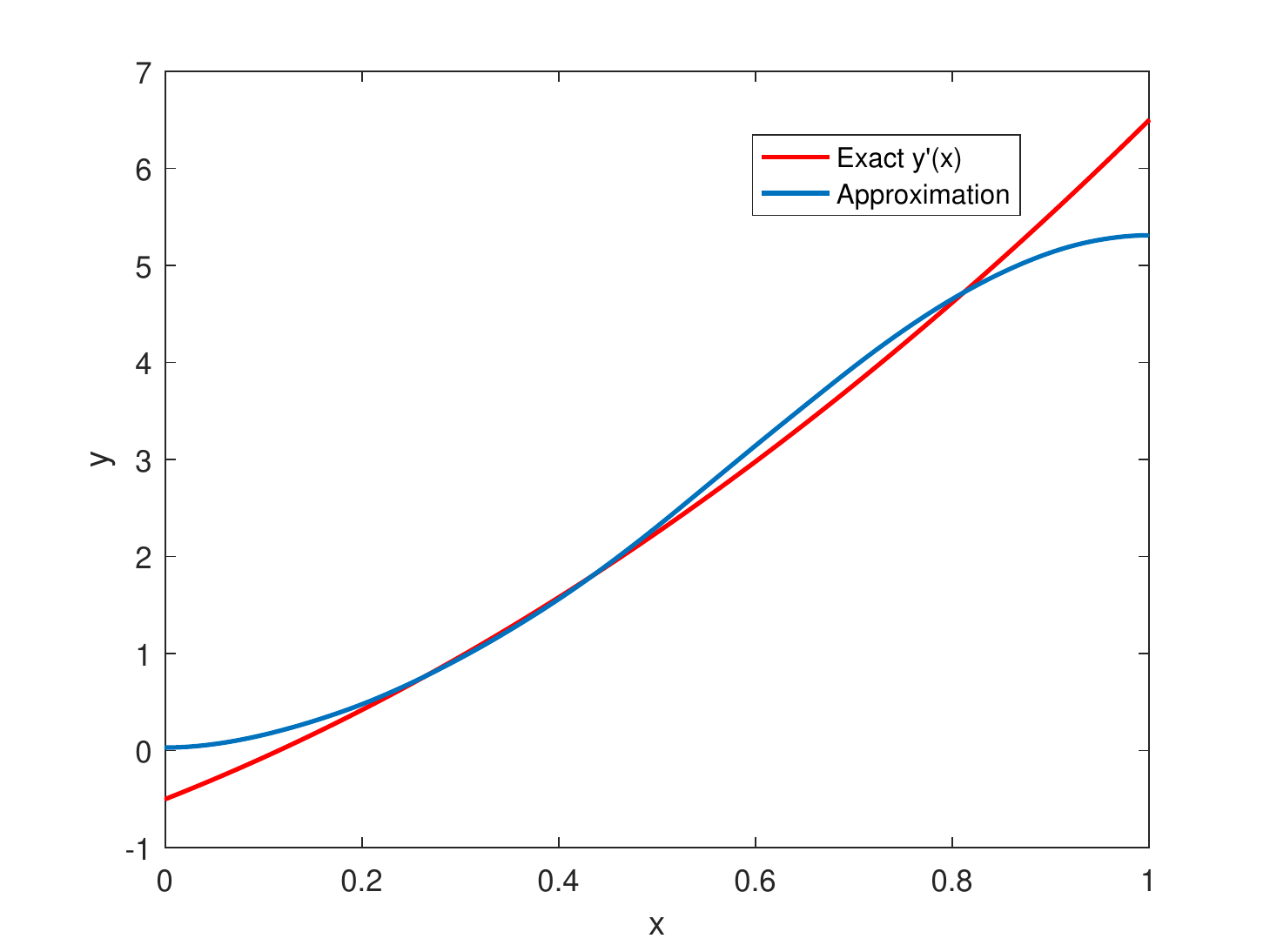}
  \end{center}
  \caption{The reconstruction of $y(x)$ (left) and the reconstruction of $y'(x)$ (right) using the method in \cite{cjw07}. \label{figure5}}
\end{figure}
The error of $\|y-f_*\|_{L^2(0,1)}$, $ \|y-f_*\|_{l^\infty}$, $\|y'-f_*'\|_{L^2(0,1)}$ and $\|y'-f_*'\|_{l^\infty}$ are $0.059733$, $0.092025$, $0.299401$ and $1.179538$ respectively. However, the computational complexity is much higher, since the matrices involved in calculation is $1000\times1000$.

The advantages of our method will be reflected when the observation data volume is very large. We provide the second example when the given function is $y(x) = 1+10x^2(1-x)^2$. Assume $L=10^6$, we add random noise at each points with normal distribution $N(0,\sigma^2)$ with $\sigma^2=0.25$ and generate corresponding noisy observations, see the Figure \ref{figurelarge1}. This time, we divided the points into $M=10$ groups, with $N=10^5$ points in every group, and the regularization parameter $\alpha$ was chosen be $\alpha = \sigma^2/N$ with constant $\bar\alpha=1$ for simplicity. The reconstruction results were plotted in Figure \ref{figurelarge1}, it is very satisfactory, and the computational complexity is very low. However, since the matrix size is too large to exceeds the capacity of the Matlab, the method by \cite{cjw07} was failed to be utilized.
\begin{figure}[htp]
  \begin{center}
  \includegraphics[width=6cm,height=6.5cm]{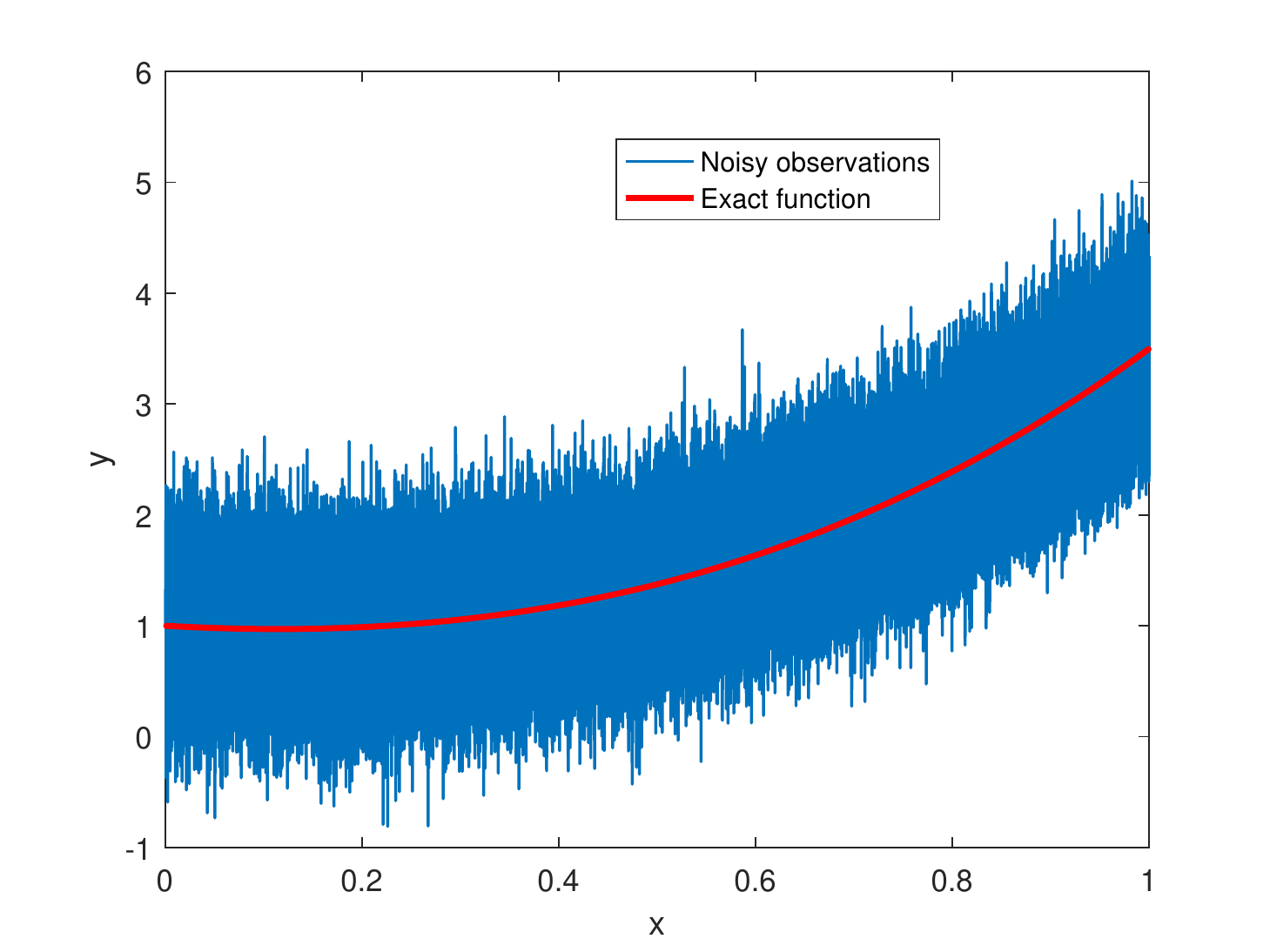}
  \includegraphics[width=6cm,height=6.5cm]{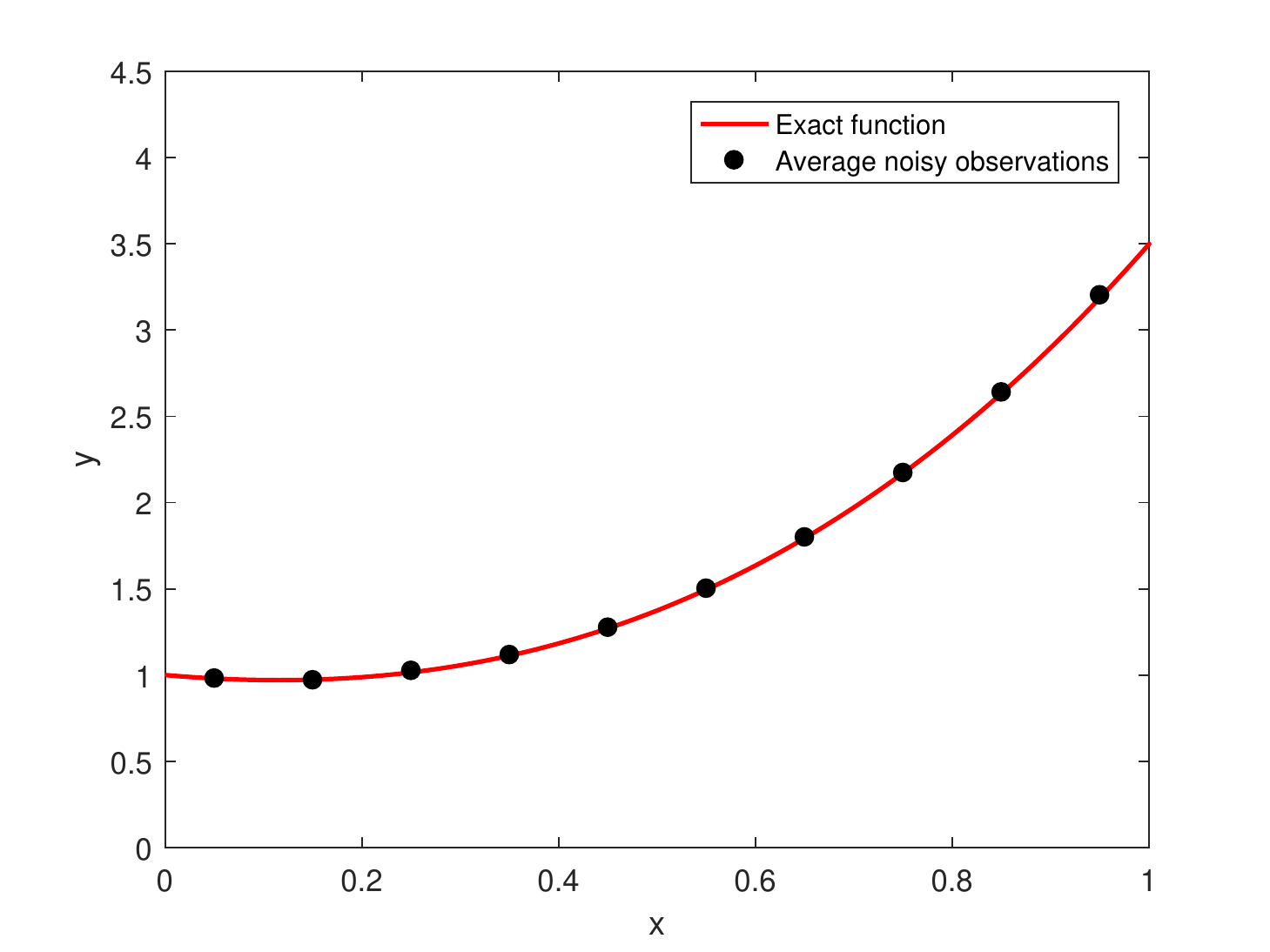}
  \includegraphics[width=6cm,height=6.5cm]{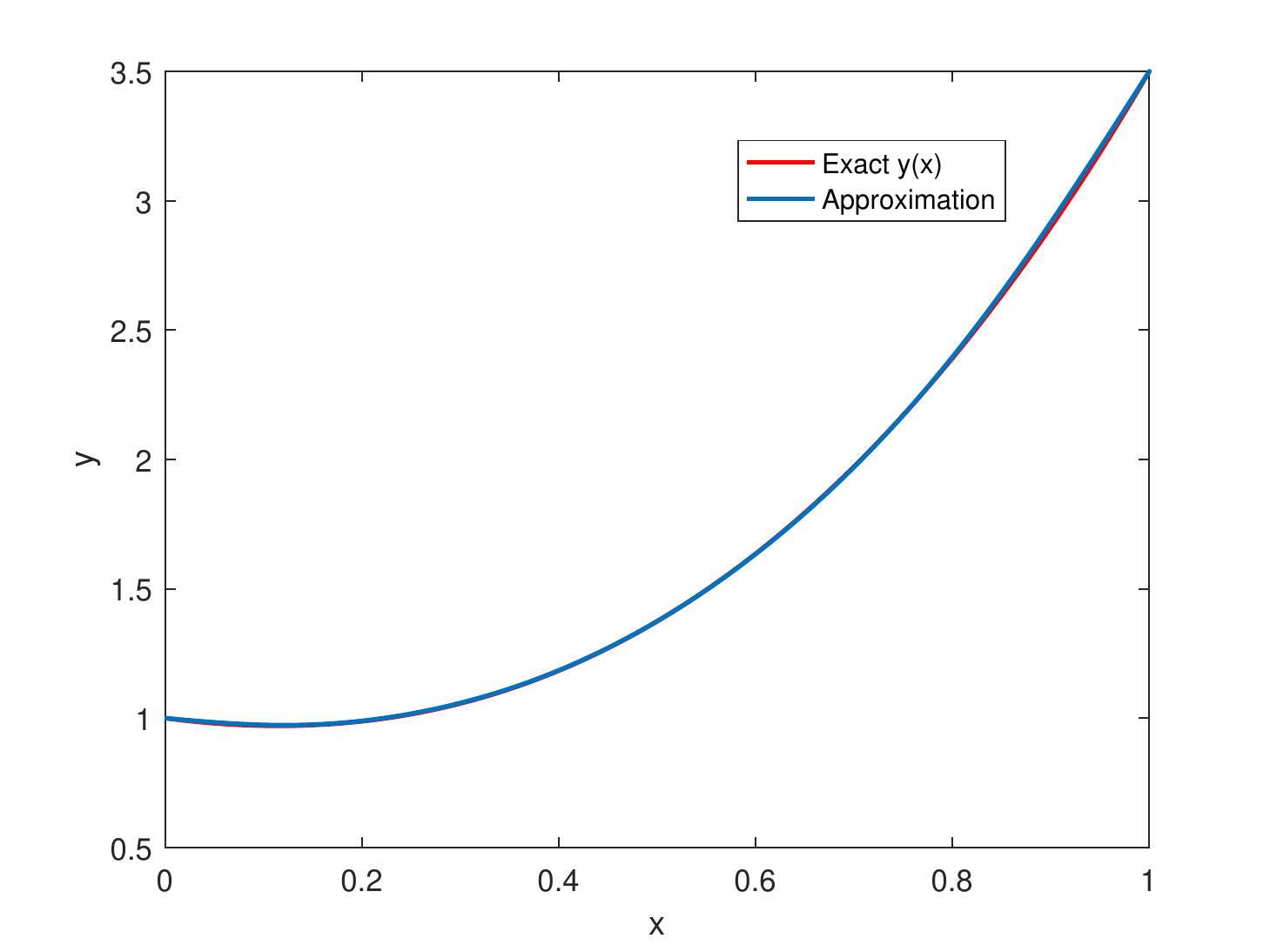}
  \includegraphics[width=6cm,height=6.5cm]{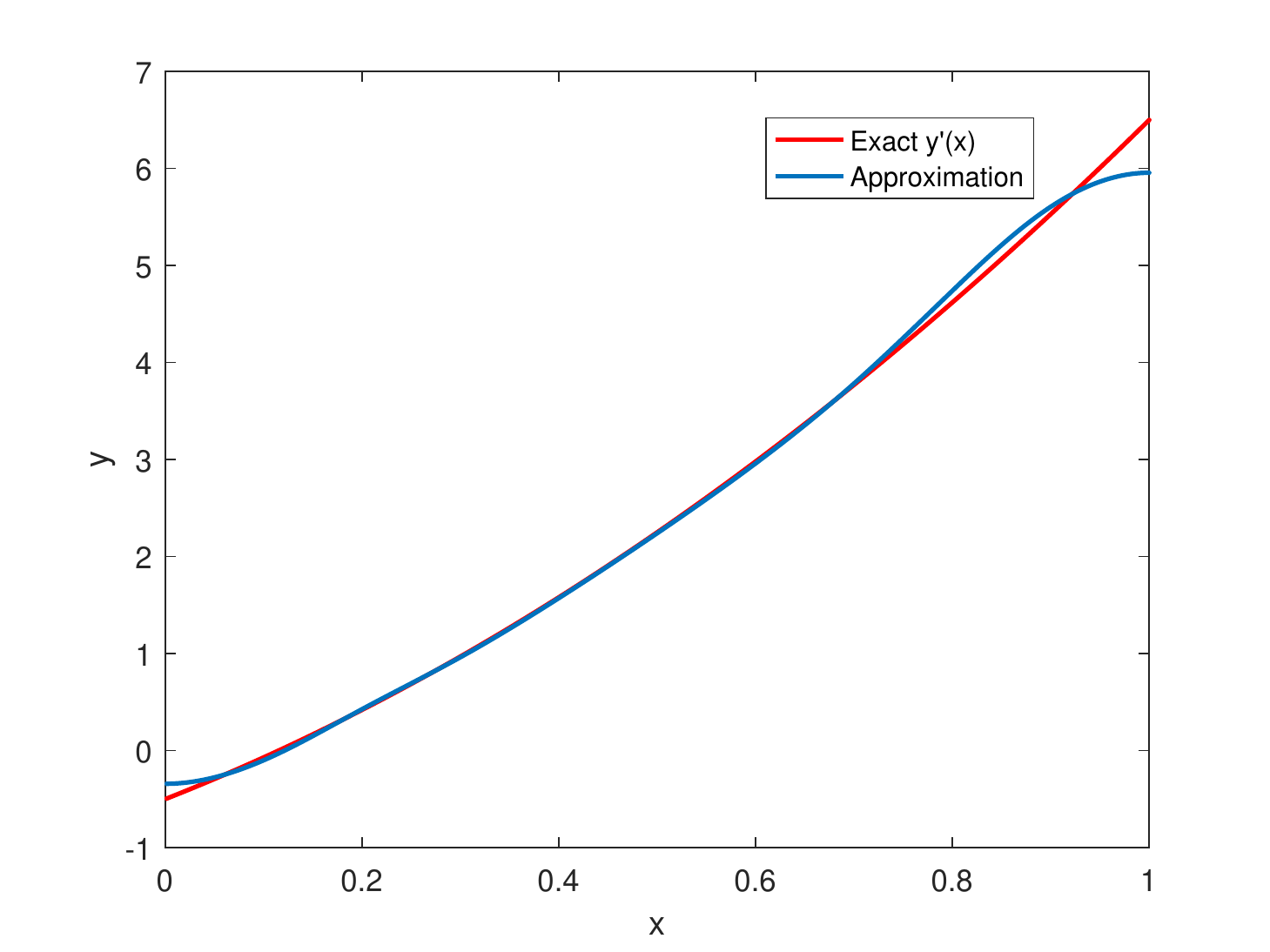}
  \end{center}
  \caption{The noisy observations, the average noisy observations with $M=10$, the approximation of $y(x)$ and the approximation of the $y'(x)$. \label{figurelarge1}}
\end{figure}
\section{Acknowledgement}
J. Cheng is supported by the NSFC (No.11971121), M. Zhong is supported by the NSFC (No. 11871149) and supported by Zhishan Youth Scholar Program of SEU.

  \end{document}